\documentclass[11pt]{amsart}
\usepackage{amsmath,amssymb}
\usepackage{tikz}
\usepackage{todonotes}

\newtheorem{theorem}{Theorem}[section]
\newtheorem{lemma}[theorem]{Lemma}
\newtheorem{corollary}[theorem]{Corollary}

\newtheorem*{theorem*}{Theorem}

\theoremstyle{definition}
\newtheorem{definition}[theorem]{Definition}

\numberwithin{equation}{section}

\newcommand{\R}{\mathbb{R}}

\newcommand{\pathto}{\rightsquigarrow}

\newcommand{\Gr}{{\rm Gr}}
\renewcommand{\int}{\mathop{\rm int}}

\newcommand{\sgn}{\mathop{\rm sgn}}

\newcommand{\Net}{{\rm Net}}
\newcommand{\wt}{\mathop{\rm wt}}

\newcommand{\var}{\mathop{\rm var}}
\newcommand{\link}{\mathop{\rm lk}\nolimits}
\newcommand{\chark}{\mathop{\rm char}(\k)}
\renewcommand{\k}{\mathbf{k}}



\begin{document}

\title[Boundary measurement and sign variation ]{Boundary measurement and sign variation in real projective space}

\author{John Machacek}\address[Machacek]
{Department of Mathematics and Statistics\\ York  University\\ To\-ron\-to, Ontario M3J 1P3\\ CANADA}
\email{machacek@yorku.ca}
\urladdr{}
\thanks{The author acknowledges generous support of the York Science Fellowship.}

\subjclass[2010]{05E45, 06A07, 14M15, 57Q15}

\begin{abstract}
We define two generalizations of the totally nonnegative Grassmannian and determine their topology in the case of real projective space.
We find the spaces to be PL manifolds with boundary which are homotopy equivalent to another real projective space of smaller dimension.
One generalization makes use of sign variation while the other uses boundary measurement.
Spaces arising from boundary measurement are shown to admit Cohen-Macaulay triangulations.
\end{abstract}

\maketitle

\section{Introduction}

The totally nonnegative Grassmannian is a certain subset of the real Grassmannian introduced by Postnikov~\cite{Pos06}.
This subset of the Grassmannian has since been the subject of much research due its rich structure and appearance in other areas including the computation of scattering amplitudes in physics~\cite{scatter} through its connection to the amplituhedron~\cite{amp}.
We define two generalizations of the totally nonnegative Grassmannian both of which build upon work related to the amplituhedron.
One generalization uses a construction called boundary measurement which was originally defined on a disk~\cite{Pos06} and has since been defined for any closed orientable surface with boundary in the scattering amplitudes literature~\cite{FGM14,FGPW15}.
The other generalization uses sign variation which has been previously used in studying the totally nonnegative Grassmannian and the amplituhedron~\cite{Karp, KarpWilliams, Unwinding}.

An important problem in total positivity is to understand the topology of the spaces arising.
Galashin, Karp, and Lam have recently settled an important conjecture in the area by showing that the totally nonnegative Grassmannian (and more generally the totally nonnegative part of any partial flag manifold as defined by Lusztig~\cite{Lus, Lus98, Luspartial}) is a regular CW complex homeomorphic to a closed ball~\cite{GKL3}.
This result was conjectured by Postnikov~\cite{Pos06} for the Grassmannian and by Williams for partial flag varieties~\cite{WilShelling}.
There was a large body of work producing evidence for these former conjectures and establishing various properties~\cite{RW08,PSW, RWregular, GKL1, GKLadv}.
The main theorem of~\cite{GKL3} also shows that the link of the identity element inside the totally positive part of the unipotent radical of an algebraic group is a regular cell complex.
This result was conjectured by Fomin and Shapiro~\cite{FS2000} and originally proven by Hersh~\cite{Hersh}.

Spaces in total positivity have rich structure.
The totally nonnegative part also turns out to be remarkably simple topologically compared to the whole space.
Our spaces relax total positivity in a controlled way.
The topology of our spaces will be completely determined in the special case of real projective space.
We will find that we never have a closed ball, except in the case of total positivity.
However, our spaces will have desirable properties including being PL manifolds (with boundary) and in certain cases admit Cohen-Macaulay triangulations.

This paper is structured as follows.
In Section~\ref{sec:gen} we will define the two generalizations of the totally nonnegative Grassmannian.
In the remaining sections we will restrict our attention to the special case of real projective space where we already observe interesting phenomena.
Much of our work comes from using sign variation to define a regular CW complex in this special case.
Mn\"ev's universality theorem~\cite{Mnev} implies the matroid stratification~\cite{GGMS} of the Grassmannian is necessarily complicated.
Due to this we do not currently know how to define cells in general Grassmannians compatible with sign variation. 
In Section~\ref{sec:var} we analyze the CW structure along with its closure poset and order complex.
In Theorem~\ref{thm:manifold} we show these spaces are PL manifolds (with boundary) by making use of Bj\"{o}rner-Wachs lexicographic shellability~\cite{BW} to obtain order complexes which are combinatorial manifolds. 
We determine homotopy types in Theorem~\ref{thm:collaspe} through the use of Forman's discrete Morse theory~\cite{Forman}.
This allows us to deduce when we have a Cohen-Macaulay order complex in Corollary~\ref{cor:CM}.
In Section~\ref{sec:meas} we consider boundary measurement.
We find in Theorem~\ref{thm:onesource} that our generalization in terms of boundary measurement is a special case of the generalization in terms of sign variation.
Moreover, the boundary measurement generalized spaces are exactly the cases in which we have a Cohen-Macaulay order complex.

\subsection*{Acknowledgments}
The author thanks Steven Karp and Michael Shapiro for discussions and comments during the development of this paper.
The author also wishes to thank the anonymous referee for their careful reading and comments.

\section{The totally nonnegative Grassmannian generalizations}
\label{sec:gen}
In this section we define two generalizations of the totally nonnegative Grassmannian.
As notation, for any positive integer we define $[n] := \{1,2,\dots, n\}$ and let $\binom{[n]}{k}$ denote the collection of all $k$-element subsets of $[n]$.

\subsection{The Grassmannian and sign variation}
We briefly review the Grassmainian from the point of view we will use.
The \emph{(real) Grassmannian} $\Gr_{k,n}$ is the set of subspaces $V \leq \mathbb{R}^n$ such that $\dim V = k$.
Each $V \in \Gr_{k,n}$ can be represented by a full rank $k \times n$ matrix whose rows span the subspace $V$.
Let $A$ be an such matrix representing $V \in \Gr_{k,n}$, then for any $I \in \binom{[n]}{k}$ let $\Delta_I(V)$ be the maximal minor of $A$ with columns indexed by $I$.
Each $\Delta_I(V)$ is called a \emph{Pl\"ucker coordinate}.
The  Pl\"ucker coordinates of $V$ depend on the choice of representing matrix $A$, but they are well-defined up to simultaneous scaling by a nonzero constant.
Hence, the Pl\"ucker coordinates are well-defined as elements of the real projective space $\mathbb{RP}^{\binom{n}{k} - 1}$.
A subspace $V \in \Gr_{k,n}$ is called \emph{totally nonnegative} if there is a scaling such that $\Delta_I(V) \geq 0$ for all $I \in \binom{[n]}{k}$ and \emph{totally positive} if $\Delta_I(V) > 0$ for all $I \in \binom{[n]}{k}$.
Following Postnikov~\cite{Pos06} we define the \emph{totally nonnegative Grassmannian} and \emph{totally positive Grassmannian} to be
\[\Gr^{\geq 0}_{k,n} := \{ V \in \Gr_{k,n} : V \text{ is totally nonnegative}\}\]
and 
\[\Gr^{>0}_{k,n} := \{ V \in \Gr_{k,n} : V \text{ is totally positive}\}\]
respectively.
This notion of positivity agrees with the Grassmannian cases of Lusztig's notion of positivity in partial flag varieties.

We define the \emph{sign function} by
\[\sgn(x) := \begin{cases} - & \text{if } x < 0\\ 0 & \text{if } x = 0\\ + & \text{if } x > 0\end{cases}\]
for any $x \in \mathbb{R}$.
We extend this function to vectors by
\[\sgn(v) := (\sgn(v_1), \sgn(v_2), \dots, \sgn(v_n))\]
for any $v \in \mathbb{R}^n$.
We will consider $\sgn(v)$ up to projective equivalence by identifying $\sgn(v)$ with $\sgn(\lambda v)$ for any $\lambda \in \mathbb{R} \setminus \{0\}$.
For example, we have
\[\sgn((-1,2,0,4,-3)) = (-,+,0,+,-) = (+,-,0,-,+)\]
and in this way can always assume that the first nonzero entry of $\sgn(v)$ is $+$.
If $v$ is any vector or sign vector, then the \emph{weight} of $v$ is denoted $\wt(v)$ and defined to be the number of nonzero entries in $v$.

Given a vector $v \in \mathbb{R}^n$ the \emph{sign variation} of $v$ is denoted $\var(v)$ and is the number of times $v$ changes sign where zeros are ignored.
The sign variation of $v$ can be computed from $\sgn(v)$.
We also define
\[\overline{\var}(v) := \max \{\var(w) : w \in \mathbb{R}^n \text{ and } w_i = v_i \text{ whenever } v_i \neq 0\}\]
which gives the largest possible sign variation when zeros are allows to replaced with any real number.
For example, $\var((1,0,3,-1,2)) = 2$ and $\overline{\var}((1,0,3,-1,2)) = 4$.
We have the following description of the totally nonnegative Grassmannian 
\[\Gr^{\geq 0}_{k,n} = \{V \in \Gr_{k,n} : \var(v) \leq k-1 \text{ for all } v \in V\}\]
and totally positive Grassmannian 
\[\Gr^{>0}_{k,n} = \{V \in \Gr_{k,n} : \overline{\var}(v) \leq k-1 \text{ for all } v \in V \setminus \{0\}\}\]
in terms of sign variation~\cite{GK, SW}.

\begin{definition}[Bounded sign variation Grassmannian]
For any $k-1 \leq m \leq n-1$, we then define the set
\[\Gr^{\var \leq m}_{k,n} := \{V \in \Gr_{k,n} : \var(v) \leq m \text{ for all } v \in V\}\]
which we call a \emph{bounded sign variation Grassmannian}.
\label{def:bounded}
\end{definition}
In Definition~\ref{def:bounded} there is no loss of generality assuming $k-1 \leq m \leq n-1$.
If $m < k-1$,  then $\Gr^{\var \leq m}_{k,n} = \emptyset$.
This can be seen by putting a matrix representing $V \in \Gr_{k,n}$ into reduced row echelon form.
Taking the alternating sum of the vectors which are the rows of the  reduced row echelon form gives an element $v \in V$ with $\var(v) \geq k-1$.
Also, since $\var(v) \leq n-1$ for any $v \in \mathbb{R}^n$ is follows that $\Gr^{\var \leq m}_{k,n} = \Gr_{k,n}$ whenever $m \geq n-1$.

We now recall a few results of Karp phrased in terms of the bounded sign variation Grassmannian which further demonstrate the analogy between $\Gr^{\geq 0}_{k,n}$ and $\Gr^{\var \leq m}_{k,n}$.
A subspace $V \in \Gr_{k,n}$ is called \emph{generic} if all Pl\"ucker coordinates are nonzero.
The set of generic elements of $\Gr^{\geq 0}_{k,n}$ is exactly $\Gr^{> 0}_{k,n}$, and the totally positive Grassmannian is dense in the totally nonnegative Grassmannian.

\begin{theorem*}[{\cite[Theorem 1.2 (i)]{Karp}}]
If $V \in \Gr^{\var \leq m}_{k,n}$, then for each $v \in V$ 
\[\var((\Delta_{I \cup \{i\}}(V))_{i \in [n] \setminus I}) \leq m - k +1\] 
for all $I \in \binom{[n]}{k-1}$.
\end{theorem*}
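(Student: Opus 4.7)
The plan is to associate to $V$ and $I$ a distinguished vector $v_I \in V$ whose nonzero coordinates encode the Plücker coordinates $(\Delta_{I \cup \{i\}}(V))_{i \in [n] \setminus I}$ up to an explicit sign twist, bound $\overline{\var}(v_I)$ using the hypothesis on $V$, and then compute $\overline{\var}(v_I)$ in closed form. Concretely, let $A$ be a $k \times n$ representing matrix for $V$. Cofactor expansion of $[A_{*,I} \mid A_{*,i}]$ along its last column produces $v_I = c^\top A \in V$ with $c_l = (-1)^{l+k}\det((A_{*,I})_{\hat l})$, satisfying $(v_I)_j = 0$ for $j \in I$ and $(v_I)_i = (-1)^{k - p_i}\,\Delta_{I \cup \{i\}}(V)$ for $i \in [n] \setminus I$, where $p_i$ is the rank of $i$ in $I \cup \{i\}$. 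Writing $[n]\setminus I = \{i_1 < \cdots < i_{n-k+1}\}$ and $\epsilon_s := (-1)^{k - p_{i_s}}$, a short computation gives $\epsilon_{s+1}/\epsilon_s = (-1)^{i_{s+1} - i_s - 1}$. By lower semicontinuity of sign variation it suffices to prove the bound generically, so I may assume every $\Delta_{I \cup \{i\}}(V)$ is nonzero (the case $v_I = 0$ forces all relevant Plücker coordinates to vanish and is trivial).

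Next, I bound $\overline{\var}(v_I)$. The map $\pi\colon V \to \R^I$, $v \mapsto (v_j)_{j \in I}$, has kernel $\text{span}(v_I)$ and is therefore surjective by rank-nullity. Thus for any sign pattern $\tau \in \{\pm\}^I$ there exists $w \in V$ with $\sgn(w_j) = \tau_j$ for $j \in I$; for $\alpha \gg 0$, the vector $w + \alpha v_I \in V$ has signs $\tau_j$ at positions in $I$ and $\sgn((v_I)_i)$ at each $i \in [n] \setminus I$. Since $\var(\cdot) \leq m$ on $V$, this realizes every filling of the zeros of $v_I$ at positions of $I$ as the sign pattern of an actual element of $V$, giving $\overline{\var}(v_I) \leq m$.

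The final step is the combinatorial identity
\[
\overline{\var}(v_I) = \var\bigl((\Delta_{I \cup \{i\}}(V))_{i \in [n] \setminus I}\bigr) + (k - 1).
\]
Decomposing $\overline{\var}(v_I)$ as the sum of maximal sign changes in the prefix $[1, i_1]$, each interior section $[i_s, i_{s+1}]$, and the suffix $[i_{n-k+1}, n]$, the prefix and suffix contribute $i_1 - 1$ and $n - i_{n-k+1}$ respectively, while each interior section of length $i_{s+1} - i_s$ contributes either $i_{s+1} - i_s$ or $i_{s+1} - i_s - 1$ depending on a parity condition comparing its boundary signs with $(-1)^{i_{s+1} - i_s}$. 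Using the formula for $\epsilon_{s+1}/\epsilon_s$ computed in the first step, this parity condition simplifies precisely to $\sgn(\Delta_{I \cup \{i_s\}}(V)) \neq \sgn(\Delta_{I \cup \{i_{s+1}\}}(V))$, and summing telescopes to the stated identity. Combined with $\overline{\var}(v_I) \leq m$, this yields $\var\bigl((\Delta_{I \cup \{i\}}(V))_{i \in [n] \setminus I}\bigr) \leq m - (k - 1) = m - k + 1$, as required.

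I expect the main obstacle to be the parity bookkeeping in the combinatorial identity: one must verify uniformly that the sign twists $\epsilon_s$ conspire with the gaps $i_{s+1} - i_s$ so that each of the $k-1$ positions in $I$ contributes exactly one unit to the excess $\overline{\var}(v_I) - \var\bigl((\Delta_{I \cup \{i\}}(V))_i\bigr)$, and the behavior near the prefix and suffix boundaries must be checked separately. The genericity reduction handling degenerate $V$ is a secondary but routine technicality.
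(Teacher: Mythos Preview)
The paper does not supply its own proof of this statement: it is quoted verbatim as \cite[Theorem 1.2 (i)]{Karp} and used only as background motivation for the definition of $\Gr_{k,n}^{\var\le m}$. So there is nothing in the present paper to compare your argument against.

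That said, your proposal is essentially correct and is in fact the standard argument (close to Karp's original). A few remarks. Your surjectivity claim for $\pi:V\to\R^I$ is valid precisely because $v_I\neq 0$ forces the $k\times(k-1)$ submatrix $A_{*,I}$ to have full column rank $k-1$, so its left kernel is one-dimensional; you use this implicitly and it is worth stating. The combinatorial identity $\overline{\var}(v_I)=\var\bigl((\Delta_{I\cup\{i\}}(V))_{i}\bigr)+(k-1)$ checks out: with $g_s:=i_{s+1}-i_s-1$ the interior block $[i_s,i_{s+1}]$ contributes $g_s+1$ exactly when $\sgn\Delta_{I\cup\{i_s\}}\neq\sgn\Delta_{I\cup\{i_{s+1}\}}$ (using $\epsilon_{s+1}/\epsilon_s=(-1)^{g_s}$), and the telescoping sum $(i_1-1)+\sum_s g_s+(n-i_{n-k+1})=k-1$ gives the constant term. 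The only point deserving care is the genericity reduction: you appeal to lower semicontinuity of $\var$ together with the density of generic points in $\Gr_{k,n}^{\var\le m}$, which is the \emph{other} cited theorem of Karp. Make sure you are not invoking the present statement to prove density (in Karp's paper the two are established independently), or alternatively bypass the reduction by observing that when some $\Delta_{I\cup\{i\}}(V)=0$ you may simply drop those indices from the sequence without increasing $\var$, and the same counting goes through.
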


\begin{theorem*}[{\cite[Theorem 1.4]{Karp}.}]
Generic elements of $\Gr_{k,n}^{\var \leq m}$ form a dense subset.
\end{theorem*}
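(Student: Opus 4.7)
The plan is to show that any $V \in \Gr_{k,n}^{\var \leq m}$ is a limit of generic elements of $\Gr_{k,n}^{\var \leq m}$, produced by right-multiplying a matrix representative of $V$ by a totally positive $n \times n$ matrix $g$ close to the identity. The key analytic input is the classical variation-diminishing property: for any totally positive $n \times n$ matrix $g$ and any vector $v \in \mathbb{R}^n$, $\var(vg) \leq \var(v)$ (Schoenberg; Gantmacher--Krein). Since every vector in the perturbed space $Vg$ is of the form $vg$ with $v \in V$, this immediately gives $Vg \in \Gr_{k,n}^{\var \leq m}$ whenever $V$ is.

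To produce such a $g$ arbitrarily close to $I_n$, I would use the fact that the identity lies in the closure of the totally positive cone; one explicit witness is the Gauss kernel $(g_t)_{ij} = e^{-(i-j)^2/t}$, which is totally positive for every $t>0$ and converges entrywise to $I_n$ as $t \to 0^+$. Hence for any open neighborhood $U$ of $I_n$, the set of totally positive matrices in $U$ is nonempty and open.

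Finally, I would show $g$ can be chosen within this open set so that $Vg$ is also generic. By the Cauchy--Binet formula, for each $I \in \binom{[n]}{k}$,
\[
\Delta_I(Vg) = \sum_{J \in \binom{[n]}{k}} \Delta_J(V)\,\Delta_{J,I}(g),
\]
which is a polynomial in the entries of $g$. The minors $\{\Delta_{J,I}(g)\}_{J}$ for fixed $I$ and varying $J$ are linearly independent as polynomials (each uses a distinct set of rows of $g$), so since at least one $\Delta_J(V)$ is nonzero, this polynomial is not identically zero. Its vanishing locus is therefore a proper algebraic subset of $n \times n$ matrices, and the finite union of such loci over $I$ cannot cover the nonempty open set of totally positive matrices in $U$. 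Any $g$ in the complement yields a generic $Vg \in \Gr_{k,n}^{\var \leq m}$ arbitrarily close to $V$, proving density.

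I expect the main obstacle to be the variation-diminishing step. Some care with conventions is required (row versus column action, and whether the bound applies to $\var$ or the stronger quantity $\overline{\var}$ appearing in the totally positive description), and a self-contained derivation invokes the compound-matrix identity for $g$, which is standard but not developed elsewhere in the paper.
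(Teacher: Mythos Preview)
This theorem is not proved in the present paper; it is only quoted, with attribution, from Karp's paper. There is therefore no in-paper proof to compare your proposal against.

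That said, your outline is essentially the argument Karp gives. The perturbation $V \mapsto Vg$ by a totally positive $g$ close to $I_n$, combined with the Schoenberg--Gantmacher--Krein variation-diminishing property $\var(vg) \le \var(v)$, is exactly the mechanism used to keep the perturbed subspace inside $\Gr_{k,n}^{\var \leq m}$; and the Cauchy--Binet/linear-independence argument you sketch is a standard way to ensure genericity on a Zariski-open (hence Euclidean-dense) set of such $g$. Your caveat about conventions is well placed: one must check that the row action $v \mapsto vg$ is the one to which variation diminishing applies, which follows since the transpose of a totally positive matrix is again totally positive. With that detail handled, the argument goes through as you describe.
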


\subsection{Boundary measurement}
Let $N = (V,E)$ be a directed graph with finite vertex set $V$ and finite edge set $E$.
Each edge $e \in E$ is assigned a \emph{weight} $x_e$.
We will sometimes consider $x_e$ as a formal variable and work in $\R[[x_e : e \in E]]$ the ring of formal power series in the variables $\{x_e\}_{e \in E}$ with coefficients in $\R$.
All formal power series we consider will have rational expressions.
We will also consider specializations of these rational expressions where each $x_e$ takes a positive real value.
As in~\cite{Pos06}, we will use the term \emph{directed network} to refer to the directed graph $N = (V,E)$ along with edge weights $\{x_e\}_{e \in E}$.

Let $S$ be a closed orientable surface of genus zero with $b > 0$ boundary components.
Let $\Net^S_{k,n}$ denote the collection of directed networks $N$ embedded on $S$ such that
\begin{enumerate}
    \item[(i)] $n$ vertices are on the boundary of $S$,
    \item[(ii)] $k$ of the $n$ boundary vertices are nonisolated sources,
    \item[(iii)] the remaining $n-k$ boundary vertices are isolated or univalent sinks,
    \item[(iv)] each interior vertex is trivalent,
    \item[(v)] and each interior vertex is neither a source nor a sink.
\end{enumerate}
Elements of $\Net^S_{k,n}$ are considered up to isotopy.
Interior vertices then come in two types.
A \emph{white} interior vertex has one incoming edge and two outgoing edges while a \emph{black} interior vertex has one outgoing edge and two incoming edges.
An example directed network in $\Net^S_{2,4}$ where $S$ is the annulus is shown in Figure~\ref{fig:network}.

\begin{figure}
\begin{tikzpicture}
\draw[thick] (0,0) circle (3cm);
\draw[thick] (0,0) circle (1cm);
\draw[dotted] (0,3) to (0,1);

\node[draw, circle, scale=0.5](a)  at (-1.41,1.41) {};
\node[draw, circle, scale=0.5, fill=black](b)  at (-1.41,-1.41) {};
\node[draw, circle, scale=0.5](c)  at (1.41,-1.41) {};
\node[draw, circle, scale=0.5, fill=black](d)  at (1.41,1.41) {};

\draw[-{latex}] (-2.12,2.12) to (a);
\draw[-{latex}] (a) to (d);
\draw[-{latex}] (a) to (b);
\draw[-{latex}] (b) to (-2.12,-2.12);
\draw[-{latex}] (c) to (b);
\draw[-{latex}] (2.12,-2.12) to (c);
\draw[-{latex}] (c)to (d);
\draw[-{latex}] (d) to (0.707,0.707);

\node at (-2.24,2.24) {$1$};
\node at (-2.24,-2.24) {$2$};
\node at (2.24,-2.24) {$3$};
\node at (0.55,0.55) {$4$};

\node at (-1.7,2) {$x_1$};
\node at (-1.6,-1.9) {$x_2$};
\node at (1.6,-1.9) {$x_3$};
\node at (0.85,1.15) {$x_4$};

\node at (-0.3, 1.56) {$x_5$};
\node at (-1.6,0) {$x_6$};
\node at (0, -1.56) {$x_7$};
\node at (1.62,0) {$x_8$};

\end{tikzpicture}
\caption{A directed network on the annulus.}
\label{fig:network}
\end{figure}
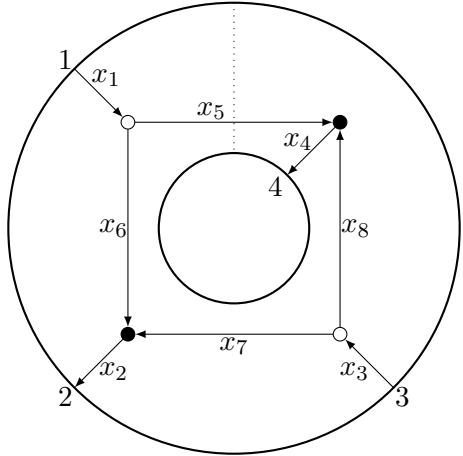

We make $b-1$ cuts between various boundary components so that the complement of these cuts is simply connected.
Let $T$ denote the complement of the cuts in $S$.
We will refer to the pair $(S,T)$ as a \emph{surface with chosen cuts} and to $T$ as a \emph{choice of cuts}.
The following construction depends the choice of cuts.
The construction will also make use of a drawing of $S$ in the plane.
The author has previously shown the construction is independent of how $S$ is drawn in the plane~\cite[Theorem 4]{Meas}.

The boundary $\partial T$ is homeomorphic to a circle and can be assumed to be piecewise smooth.
We will consider the boundary equipped with a piecewise smooth parameterization $\phi:[0,1] \to \partial T$ with $\phi(0) = \phi(1)$.
We assume throughout that all parameterizations are piecewise smooth and have nowhere zero derivative.
Traversing $\partial T$ according to $\phi$ induces an ordering the boundary vertices of any $N \in \Net^S_{k,n}$.
We will always label the boundary vertices $\{1,2,\dots, k\}$.

Given any smooth closed curve $C$ in the plane let $\mathbf{T}: C \to S^1$ give the unit tangent vector of each point. 
The \emph{rotation number} of $C$ is the degree of the map $\mathbf{T} \circ \psi: S^1 \to S^1$ where $\psi: S^1 \to C$ is a parameterization of $C$.
The rotation number of a continuous curve $C$ we will be the rotation number of a smooth curve approximating $C$.
The choice of which smooth curve is used as an approximation does not affect the rotation number.

We consider $S$ drawn in the plane by choosing a boundary component of $S$ to be external.
The external boundary component bounds a disk in the plane.
The rest of $S$ as well as any directed network on $S$ can then be draw inside this disk.
Consider a network $N \in \Net^S_{k,n}$ drawn in the plane
Then overlay the cuts used to construct $T$.
For any path $P:i \pathto j$ between boundary vertices $i$ and $j$ we form a closed curve $C(P)$ in the plane as follows
\begin{enumerate}
\item Traverse the path $P$ from $i$ to $j$ in $S$.
\item Follow the boundary of $T$ in the direction specified by $\phi$ from $j$ to $i$.
\end{enumerate}

The \emph{boundary measurement matrix}  is the $k  \times n$ matrix $B(N, T)$ with entries given by
\[B_{ij} = \sum_{P: i \pathto j} (-1)^{s_{ij} + r_P + 1} \wt(P)\]
where $i$ is a source and $j$ is any boundary vertex.
Here $s_{ij}$ denotes the number of sources on the boundary strictly between $i$ and $j$, and $r_P$ denotes the rotation number of $C(P)$.

Postnikov~\cite{Pos06} gave the original definition of the boundary measurement matrix in the case where the surface is a disk.
The boundary measurement matrix was considered for networks on the annulus by Gekhtman, Shapiro, and Vainshtein~\cite{GSV08} and for networks on any closed orientable genus zero surface with boundary by Franco, Galloni, and Mariotti~\cite{FGM14}.
Boundary measurement has also been defined by Franco, Galloni, Penante, and Wen~\cite{FGPW15} on any  closed orientable surface with boundary.
Further study of this more general boundary measurement matrix is done in~\cite{Meas}.
Letting $N$ be the directed network in Figure~\ref{fig:network} with choice of cuts $T$ shown by the dotted line, we find that
\[B(N,T) = \begin{bmatrix} 1 & x_1x_6x_2 & 0 & x_1x_5x_4 \\ 0 & x_3x_7x_2 & 1 & x_3x_8x_4\end{bmatrix}\]
is the boundary measurement matrix.

\begin{definition}[Boundary mesurement Grassmannian]
Given a closed orientable surface of genus zero with boundary $S$ and $k \leq n$, we define a \emph{boundary measurement Grassmannian} as
\[\Gr^S_{k,n} := \{B(N,T) : N \in \Net^S_{k,n} \text{ and } (S,T) \text{ is a surface with chosen cuts}\}\]
where we think of the full rank $k \times n$ matrix $B(N,T)$ as an element of $\Gr_{k,n}$.
\end{definition}

Postnikov~\cite{Pos06} defined boundary measurement because when $S$ is the disk we have $\Gr^S_{k,n} = \Gr^{\geq 0}_{k,n}$.
Hence, the boundary measurement Grassmannian also gives a generalization of the totally nonnegative Grassmannian.
Moreover, the author has shown the Pl\"ucker coordinates of $B(N,T)$ for $N \in \Net^S_{k,n}$ have a formula which is a rational expression described in terms of nonintersecting paths~\cite[Corollary 9]{Meas}.
Such a formula when the surface is a disk was originally shown by Talaska~\cite{Talaska}.

\section{Complexes inside real projective space}
\label{sec:var}

In this section we focus on the case where $k = 1$.
So, we will be working inside $\Gr_{1,n}$ which is the real projective space $\mathbb{RP}^{n-1}$.
A regular CW structure on $\mathbb{RP}^{n-1}$ will be presented which is compatible with sign variation in a way that will allow use to obtain a CW structure on $\Gr^{\var \leq m}_{1,n}$ for any $0 \leq m \leq n-1$.
We will investigate the topology of $\Gr^{\var \leq m}_{1,n}$ using these CW complexes along with their closure posets and corresponding order complexes.
The next subsection contains standard definitions and results in combinatorial topology.

\subsection{Some general combinatorial and poset topology}
Here we give a quick overview of some combinatorial topology with a bias towards what is needed for our purposes.
For further references one can see the survey of Bj\"{o}rner~\cite{surveyB} and the survey of Wachs~\cite{surveyW}.

An \emph{(abstract) simplicial complex} $\Delta$ on a (finite) vertex set $V$ is a collection of nonempty subsets of $V$ that is closed under inclusion.
Every simplicial complex has a \emph{geometric realization} denoted $\|\Delta\|$ which is a topological space.
An element $F \in \Delta$ is called a \emph{face}, and the dimension of $F \in \Delta$ is $\dim(F) = |F|  - 1$.
The faces of $\Delta$ which are maximal with respect to inclusion are call \emph{facets}.
A simplicial complex is said to be \emph{pure} if all facets have the same dimension.
A simplicial complex is \emph{shellable} if there is an ordering of the facets $F_1, F_2, \dots, F_k$ such that $\left(\bigcup_{i = 1}^{j-1} F_i \right) \cap F_j$ is pure and  $(\dim(F_j) - 1)$-dimensional for all $2 \leq j \leq k$.
The \emph{join} of two simplicial complexes $\Delta$ and $\Gamma$ on disjoint vertex sets is the simplicial complex 
\[\Delta * \Gamma := \Delta \cup \Gamma \cup \{F \cup G : F \in \Delta, G \in \Gamma\}.\]
The join of two simplicial complexes is shellable if and only if both of the simplicial complexes are themselves shellable.

The \emph{link} of any face $F \in \Delta$ is denoted $\link_{\Delta}(F)$ defined by 
\[\link_{\Delta}(F) := \{G: F \cup G \in \Delta, F \cap G = \varnothing, G \neq \varnothing\}.\]
We also let $\link_{\Delta}(\varnothing) = \Delta$.
We call $\Delta$ a \emph{normal $d$-pseudomanifold} provided
\begin{enumerate}
    \item[(NP1)] $\Delta$ is pure of dimension $d$,
    \item[(NP2)] every $(d-1)$-dimensional face of $\Delta$ is contained in at most two facets of $\Delta$,
    \item[(NP3)] and for all $F \in \Delta$ with $\dim F \leq d-2$ the link $\link_{\Delta} (F)$ is connected.
\end{enumerate}
The term \emph{$d$-pseudomanifold} refers to a complex $\Delta$ satisfying only the conditions (NP1) and (NP2).
We may say just normal pseudomanifold or pseudomanifold when we do not wish to specify the dimension.
In our terminology all normal pseudomanifolds and pseudomanifolds are allowed to have possibly nonempty boundary.
One fact we will need is that the link of a face of codimension at least $2$ in a normal pseudomanifold is a pseudomanifold.
Another fact we use later is that if a pseudomanifold $\Delta$ is shellable, then $\|\Delta\|$ is either a PL-sphere or a PL-ball.
A PL-sphere or PL-ball is a simplicial complex combinatorially equivalent to a subdivison of a simplex or boundary of a simplex respectively.
More on PL topology can be found in the book~\cite{Hudson}.

A simplicial complex $\Delta$ is called a \emph{combinatorial $d$-manifold} if for every face $F \in \Delta$ we have that $\|\link_{\Delta}(F)\|$ is a $(d- \dim(F) -1)$-dimensional PL sphere or PL ball.
Similar to before we just say combinatorial manifold when specifying the dimension is not needed.
If $\Delta$ is a combinatorial manifold, then $\| \Delta \|$ is a PL-manifold.
Again we by default allow for possibly nonempty boundary.

Let $P$ be a finite poset.
An element $y \in P$ is said to \emph{cover} $x \in P$ if $x < y$ and there does not exist any $z \in P$ such that $x < z < y$.
In the case $y$ covers $x$ we write $x \lessdot y$.
A sequence of elements $x_0 < x_1 < \cdots < x_t$ is a \emph{chain} of length $t$.
For any $x,y \in P$ we define the \emph{closed interval} $[x,y]$ by
\[[x,y] := \{z \in P : x \leq z \leq y\}\]
and the \emph{open interval} $(x,y)$ by
\[(x,y) := \{z \in P : x < z < y\}.\]
The \emph{length} of $P$ is the maximum of the lengths of all chains in $P$.
We call a poset \emph{pure} if all chains which are maximal by inclusion have the same length.

The \emph{order complex} of $P$ is denoted $\Delta(P)$ and is the simplicial complex whose vertices of the elements of $P$ and whose $k$-dimensional faces are chains $x_0 < x_1 < \cdots < x_k$.
In the case $P$ has a unique minimal or a unique maximal element, we denote it by $\hat{0}$ or $\hat{1}$ respectively.
Elements which cover $\hat{0}$ are called \emph{atoms}, and elements covered by $\hat{1}$ are called \emph{coatoms}.
 If $P$ has $\hat{0}$ and every interval $[\hat{0},x]$ for $x \in P$ is isomorphic to a Boolean lattice we say that $P$ is a \emph{simplicial poset}.
 The \emph{join} of two posets $P$ and $Q$ is the poset $P * Q$ whose underlying set in the disjoint union of $P$ and $Q$ and whose order relation includes all relations from $P$ and $Q$ along with declaring $x < y$ for all $x \in P$ and $y \in Q$.
It follows that $\Delta(P*Q) = \Delta(P) * \Delta(Q)$.
 
Let $P$ be a \emph{bounded poset} which means $P$ has $\hat{0}$ and $\hat{1}$.
We say $P$ admits a \emph{recursive coatom ordering}~\cite{BW} if the length of $P$ is $1$, or else if $P$ has an ordering $c_1, c_2, \dots, c_t$ of the coatoms of $P$ satisfying
\begin{enumerate}
\item[(RCO1)] for all $1 \leq j \leq t$ the interval $[\hat{0}, c_j]$ admits a recursive coatom ordering in which coatms of $[\hat{0}, c_j]$ which belong to $[\hat{0}, c_i]$ for some $i < j$ come first,
\item[(RCO2)] and for all $1 \leq i < j \leq t$ if $x < c_i, c_j$, then there exists $k < j$ and a coatom $d$ of $[\hat{0}, c_j]$ such that $x \leq d < c_k$.
\end{enumerate}
If $P$ admits a recursive coatom ordering, then $\Delta(P)$ is shellable.
In the case $\Delta(P)$ is shellable for a poset $P$ we will say that $P$ is shellable.

We let $\overline{P}$ denote the \emph{proper part} of $P$ which is obtained by removing $\hat{0}$ and $\hat{1}$ if they are present.
A bounded poset $P$ is shellable if and only if $\overline{P}$ is shellable.
If a poset is shellable, then every open or closed interval is also shellable.

 For a CW complex $X$ we let $\mathcal{F}(X)$ denote the \emph{closure poset} of $X$ which consists of the closure of cells ordered by inclusion.
For a regular CW complex $X$ it is the case that $X$ is homeomorphic to $\Delta(\mathcal{F}(X))$.
In fact, $\Delta(\mathcal{F}(X))$ will be the barycenteric subdivision of $X$.

\subsection{The CW structure}
We now return to our study of $\Gr^{\var \leq m}_{1,n}$ and will define a regular CW decomposition of $\Gr^{\var \leq m}_{1,n}$.
Given a sign vector $\omega \in \{-,0,+\}^n \setminus \{0\}^n$, the set
\[U_{\omega} := \{v \in \mathbb{RP}^{n-1} : \sgn(v) = \omega\}\]
is homeomorphic to an open ball.
The closure of this open ball is the closed ball which is a  $(\wt(\omega)-1)$-simplex consisting of all $U_{\omega'}$ where $\omega'$ is obtain from $\omega$ by replacing some of its nonzero entries by zero.
The sets $U_{\omega}$ give a decomposition which makes $\mathbb{RP}^{n-1}$ into a regular CW complex.
The subcomplex given $\{U_{\omega} : \var(\omega) \leq m\}$ defines a regular CW structure on $\Gr^{\var \leq m}_{1,n}$.
We will let $P_{n,m}$ denote the closure poset of this regular CW complex we have just defined on  $\Gr^{\var \leq m}_{1,n}$.
The Hasse diagram of the poset $P_{3,2}$ is shown in Figure~\ref{fig:P_32}.
We then let $\Delta_{n,m}$ denote $\Delta(P_{n,m})$.
An element of $P_{n,m}$ is a cell $U_{\omega}$ which we will typically just identify with the sign vector $\omega$.
We have the following lemma.

\begin{lemma}
For any $0 \leq m \leq n-1$ the poset $P_{n,m} \cup \{\hat{0}\}$ is a pure simplicial poset.
\label{lem:simplicialPoset}
\end{lemma}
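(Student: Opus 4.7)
The plan is to verify the two ingredients of ``pure simplicial poset'' separately, and both will rest on a single elementary observation: if $\omega'$ is obtained from $\omega$ by zeroing out some of its nonzero entries, then $\var(\omega') \leq \var(\omega)$, since deleting entries can only destroy sign changes, not create new ones.

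First I would unpack the order on $P_{n,m}$ coming from the CW structure. By the description of the closure of $U_{\omega'}$ already given in the excerpt, $\omega \leq \omega'$ in $P_{n,m}$ iff $\omega$ is obtained from $\omega'$ by replacing some subset of its nonzero entries with $0$. Combined with the monotonicity of $\var$, this also pins down the cover relations: $\omega \lessdot \omega'$ forces $\wt(\omega') = \wt(\omega) + 1$, because any strictly intermediate $\omega''$ obtained by zeroing out all but one of the extra nonzero entries of $\omega'$ satisfies $\var(\omega'') \leq \var(\omega') \leq m$ and so already lies in $P_{n,m}$.

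For the simplicial property, fix $\omega \in P_{n,m}$ and write $\omega|_S$ for the sign vector agreeing with $\omega$ on $S$ and zero elsewhere. The elements of $[\hat{0}, \omega] \setminus \{\hat{0}\}$ are then exactly the $\omega|_S$ as $S$ ranges over nonempty subsets of the support of $\omega$; each lies in $P_{n,m}$ by $\var$-monotonicity, and the assignment $S \mapsto \omega|_S$ (with $\varnothing \mapsto \hat{0}$) is an order isomorphism from the Boolean lattice on the support of $\omega$ onto $[\hat{0}, \omega]$.

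For purity, every maximal chain in $P_{n,m} \cup \{\hat{0}\}$ runs $\hat{0} \lessdot \omega_1 \lessdot \cdots \lessdot \omega_t$ with $\omega_t$ a maximal element of $P_{n,m}$, and has length $\wt(\omega_t)$ by the cover description. It therefore suffices to show that every maximal element of $P_{n,m}$ has weight $n$, i.e., that any $\omega \in P_{n,m}$ extends to some $\omega' \in P_{n,m}$ with no zero entries. I would produce such an $\omega'$ by scanning $\omega$ and replacing each $0$ with the sign of its nearest nonzero entry on the left (using the nearest nonzero entry on the right for any leading zeros). This copy-from-neighbor rule inserts only signs that match an adjacent nonzero entry, so it creates no new sign changes and $\var(\omega') = \var(\omega) \leq m$. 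The purity step (in particular this extension claim) is the only nontrivial piece of the argument; the Boolean intervals and cover description follow essentially formally from the monotonicity of $\var$ under restriction of support.
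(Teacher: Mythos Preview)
Your proof is correct and follows the same approach as the paper's: both identify $[\hat{0},\omega]$ with the Boolean lattice on the support of $\omega$, and both argue purity by observing that maximal chains go up one weight at a time to a weight-$n$ element. Your write-up is more careful in that you explicitly justify why every $\omega\in P_{n,m}$ sits below some full-support $\omega'\in P_{n,m}$ via the copy-from-neighbor extension, a step the paper simply asserts.
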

\begin{proof}
Each maximal chain in $P_{n,m} \cup \{\hat{0}\}$ will have length $n$ since each such chain will be of the form
\[\hat{0} \lessdot \omega_1 \lessdot \cdots \lessdot \omega_n\]
where $\wt(\omega_i) = i$ for $1 \leq i \leq n$.
Thus, $P_{n,m} \cup \{\hat{0}\}$ is pure.
Furthermore, we see that for any nonzero sign vector $\omega$ and interval $[\hat{0}, \omega]$ is a Boolean lattice of rank $\wt(\omega)$.
\end{proof}

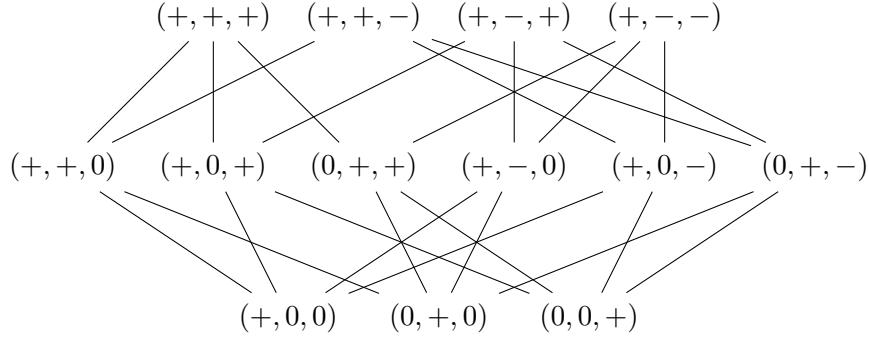
\begin{figure}
    \centering
    \begin{tikzpicture}
    \node (a) at (-3,2) {$(+,+,+)$};
    \node (b) at (-1,2) {$(+,+,-)$};
    \node (c) at (1,2) {$(+,-,+)$};
    \node (d) at (3,2) {$(+,-,-)$};
    
    \node (e) at (-5,0) {$(+,+,0)$};
    \node (f) at (-3,0) {$(+,0,+)$};
    \node (g) at (-1,0) {$(0,+,+)$};
    \node (h) at (1,0) {$(+,-,0)$};
    \node (i) at (3,0) {$(+,0,-)$};
    \node (j) at (5,0) {$(0,+,-)$};
    
    \node (k) at (-2,-2) {$(+,0,0)$};
    \node (l) at (0,-2) {$(0,+,0)$};
    \node (m) at (2,-2) {$(0,0,+)$};
    
    \draw (a) to (e);
    \draw (a) to (f);
    \draw (a) to (g);
    
    \draw (b) to (j);
    \draw (b) to (i);
    \draw (b) to (e);
    
    \draw (c) to (h);
    \draw (c) to (f);
    \draw (c) to (j);
    
    \draw (d) to (h);
    \draw (d) to (i);
    \draw (d) to (g);

    \draw (k) to (e);
    \draw (k) to (f);
    \draw (k) to (h);
    \draw (k) to (i);
    
    \draw (l) to (e);
    \draw (l) to (g);
    \draw (l) to (h);
    \draw (l) to (j);
    
    \draw (m) to (f);
    \draw (m) to (g);
    \draw (m) to (i);
    \draw (m) to (j);

    \end{tikzpicture}
    \caption{The Hasse diagram of the poset $P_{3,2}$.}
    \label{fig:P_32}
\end{figure}

\subsection{Combinatorial and PL manifolds}

In this subsection we will show that the simplicial complex $\Delta_{n,m}$ is a combinatorial $(n-1)$-manifold and hence $\Gr^{\var \leq m}_{1,n}$ is a PL manifold.

\begin{lemma}
For any $0 \leq m \leq n-1$ and $F \in \Delta_{n,m}$ the link $\link_{\Delta_{n,m}}(F)$ is shellable.
\label{lem:shellable}
\end{lemma}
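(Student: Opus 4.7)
The approach is to decompose the link of a chain in $\Delta_{n,m}$ as a join of order complexes of open intervals in a suitably augmented poset, and then verify that each factor is shellable. For $F = \{\omega_1 < \cdots < \omega_j\} \in \Delta_{n,m}$, setting $\hat{P} := P_{n,m} \cup \{\hat{0},\hat{1}\}$, we have the standard identity
\[
\link_{\Delta_{n,m}}(F) = \Delta((\hat{0},\omega_1)) * \Delta((\omega_1,\omega_2)) * \cdots * \Delta((\omega_j,\hat{1})),
\]
where each open interval is taken in $\hat{P}$, and the case $F = \varnothing$ collapses to $\Delta((\hat{0},\hat{1})) = \Delta_{n,m}$. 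Since a join of simplicial complexes is shellable if and only if each of its factors is shellable, the lemma reduces to proving that every open interval of $\hat{P}$ has a shellable order complex.

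For an open interval $(x,y)$ of $\hat{P}$ with $y \in P_{n,m}$, Lemma~\ref{lem:simplicialPoset} gives that $[\hat{0},y]$ is a Boolean lattice of rank $\wt(y)$. Since replacing a nonzero entry of a sign vector by zero cannot increase the variation, every element below $y$ still lies in $P_{n,m} \cup \{\hat 0\}$, so $[x,y]$ is Boolean of rank $\wt(y) - \wt(x)$ (with $\wt(\hat 0):=0$). The order complex of its proper part is then either empty (for rank at most one) or the barycentric subdivision of a simplex boundary, which is shellable in every case. This disposes of every factor of the join except those of the form $\Delta((\omega,\hat{1}))$ for some $\omega \in P_{n,m} \cup \{\hat{0}\}$.

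For an upper interval $[\omega, \hat{1}]$ I plan to construct a recursive coatom ordering in the sense of Bj\"orner--Wachs~\cite{BW}; by the characterization quoted in the preceding subsection, this implies $(\omega,\hat 1)$ is shellable. The coatoms are precisely the full-weight sign vectors $c$ with $c \geq \omega$ and $\var(c) \leq m$, and a natural candidate orders them lexicographically in their entries, reading positions of $[n]$ from left to right after fixing a total order on $\{+,-\}$. Axiom (RCO1) will be straightforward, because each lower interval $[\omega,c]$ is Boolean (again by Lemma~\ref{lem:simplicialPoset}), so any prescribed preferred coatoms can be placed first in an ordering of that Boolean lattice. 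The main obstacle will be verifying axiom (RCO2): given $x \leq c_i, c_j$ with $i < j$, one must locate an earlier coatom $c_k$ and a Boolean-coatom $d$ of $[\omega,c_j]$ with $x \leq d < c_k$. The subtlety is that $\var \leq m$ is a global constraint, so a single sign flip in $c_j$ need not produce another valid coatom. My intended strategy is to flip $c_j$ at the leftmost coordinate where it disagrees with $c_i$, using the existence of $c_i$ as a witness that a lexicographically earlier sign pattern respecting the variation bound exists, with an induction on $n-\wt(\omega)$ handling the degenerate cases in which $x$ constrains most coordinates of $c_j$.
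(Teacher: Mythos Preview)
Your reduction via the join decomposition and the plan to exhibit a recursive coatom ordering on each $[\omega,\hat 1]$ matches the paper's approach; the Boolean intervals below any $y\in P_{n,m}$ are handled identically. The divergence is in the coatom ordering itself and, crucially, in your verification of (RCO2).

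The ``flip at the leftmost disagreement with $c_i$'' recipe does not work. Take $n=5$, $m=2$, $\omega=\sgn(e_1)=(+,0,0,0,0)$, and the full-weight coatoms (normalized to start with $+$) ordered lexicographically with $+<-$. Let
\[
c_i=(+,-,+,+,+),\qquad c_j=(+,-,-,-,-),\qquad x=(+,-,0,0,0).
\]
Then $\var(c_i)=2$, $\var(c_j)=1$, $x\le c_i,c_j$, and $c_i<_{\mathrm{lex}}c_j$ since they first differ in position~$3$. Flipping $c_j$ at position~$3$ yields $(+,-,+,-,-)$, which has $\var=3>m$ and is therefore not a coatom. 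So your proposed witness for (RCO2) fails here. (In this particular example (RCO2) can still be rescued via $d=(+,-,-,-,0)$ and $c_k=(+,-,-,-,+)$, but that is not what your argument produces, and you give no mechanism for finding such alternative witnesses; the vague induction on $n-\wt(\omega)$ does not address this.)

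The paper avoids this difficulty in two ways. First, it reduces to the case $\omega=\sgn(e_i)$ by noting that open intervals of a shellable poset are shellable, so it suffices to treat the upset over an atom. Second, and more importantly, it does \emph{not} use the plain lexicographic order on sign patterns. Instead it encodes each full-weight coatom $c$ by the set $\phi_i(c)\subseteq[n]\setminus\{i\}$ of positions where a sign change occurs (recorded on the side away from $i$), and orders coatoms first by $|\phi_i(c)|$ and then lexicographically with respect to an order $<_i$ that places positions farthest from $i$ earliest. With this ordering the key step of (RCO2) becomes: if $F=\{a:x_a=0\}$ meets $\phi_i(c_j)$, pick $b\in F\cap\phi_i(c_j)$ and flip $c_j$ at $b$; the resulting $c_k$ satisfies $\phi_i(c_k)\subseteq(\phi_i(c_j)\setminus\{b\})\cup\{b\pm 1\}$, hence $c_k$ is strictly earlier. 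The graded-by-cardinality aspect of the order is exactly what guarantees $\var(c_k)\le\var(c_j)\le m$, so the obstacle you identified never arises.
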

\begin{proof}
For any $x,y \in P_{n,m} \cup \{\hat{0}\}$ we find by Lemma~\ref{lem:simplicialPoset} that the open interval $(x,y)$ is isomorphic to a Boolean lattice with top and bottom elements removed.
It follows that $\Delta((x,y))$ is shellable.
By considering joins, we may then reduce to the case for an upset $\{y : x < y\} \subseteq P_{n,m}$ for some $x \in P_{n,m}$.
Moreover, by using that open intervals of shellable posets are shellable we may look at such an upset where $x = \sgn(e_i)$ for some $1 \leq i \leq n$ where $e_i$ is the $i$th standard basis vector.
We will consider
\[Q_i = \{y : \sgn(e_i) \leq y\} \subseteq P_{n,m} \cup \{\hat{1}\}\]
which is a bounded poset with bottom element $\sgn(e_i)$ and top element $\hat{1}$.
So, we may give recursive coatom ordering for $Q_i$ and conclude that $\overline{Q}_i$ is shellable.
Showing this will prove the lemma.

Let the coatoms of $Q_i$ be $c_1, c_2, \dots, c_t$.
We will later explain the ordering of the coatoms, but first notice that each interval $[\sgn(e_i), c_j]$ for $1 \leq j \leq t$ is a Boolean lattice.
Any coatom ordering in a Boolean lattice is a recursive coatom ordering, and it follows that (RCO1) holds no matter what coatom ordering we choose in $Q_i$.

Recalling that $Q_i$ includes $\hat{1}$, the coatoms are sign vectors $\omega$ such that $\wt(\omega) = n$ and $\var(\omega) \leq m$.
We have a bijection $\phi_i$ between sign vectors $\omega$ such that $\wt(\omega) = n$ with $\var(\omega) = \ell$ and $\ell$ element subsets of $[n] \setminus \{i\}$ which we will now describe.
Consider such a sign vector $\omega$.
For $j < i$ we have $j \in \phi_i(\omega)$ if and only if $\omega_j$ and $\omega_{j+1}$ differ in sign.
For $j > i$ we have $j \in \phi_i(\omega)$ if and only if $\omega_j$ and $\omega_{j-1}$ differ in sign.
So, $\phi_i$ records all sign changes with the convention that whenever two adjacent entries differ in sign the entry furthest from $i$ is recorded.
Now coatoms of $Q_i$ are in bijective correspondence with subsets of $[n] \setminus \{i\}$ of cardinality at most $m$.

We define an order $<_i$ on $[n] \setminus \{i\}$ by $i + a <_i i + b$ if $|a| > |b|$ or if $|a| = |b|$ and $a > 0 > b$.
So, the smallest elements in this order are the elements furthest from $i$.
The order $<_i$ is then extended to all subsets of $[n] \setminus \{i\}$ in a graded lexicographical fashion.
That is, given $A,B \subseteq [n] \setminus \{i\}$ we have $A <_i B$ if and only if $|A| < |B|$ or $|A| = |B|$ while $A$ is lexicographically less than $B$ with respect to $<_i$.
We claim that $<_i$ gives a recursive coatom ordering in $Q_i$.
So, the coatoms of $Q_i$ are ordered $c_1, c_2, \dots, c_t$ where $j_1 < j_2$ if and only if $\phi_i(c_{j_1}) <_i \phi_i(c_{j_2})$.

We must verify (RCO2).
Take $1 \leq j_1 < j_2 \leq t$ and $x < c_{j_1}, c_{j_2}$.
Let $x = (x_1, x_2, \dots, x_n)$ and set $F = \{a : x_a = 0\}$ which records the positions that the sign vector $x$ is $0$.
Notice changing values of a sign vector in positions from the set $F$ does not change whether or not a sign vector is above $x$.
If $F \cap \phi_i(c_{j_2}) = \varnothing$, then it follows that $c_{j_2}$ is smallest coatom in our coatom ordering which is above $x$.
Indeed in the case $F \cap \phi_i(c_{j_2}) = \varnothing$ we can obtain $c_{j_2}$ from $x$ as follows.
Start at entry $x_i$ which is nonzero since $\sgn(e_i) \leq x$.
Then move away from $i$ (in either direction).
Whenever an entry which is zero is encountered, it is to be filled with whatever is present in the entry directly before it.
This means the number of sign flips is as small as possible.
Also, any sign flips are as far from $i$ as possible.
Hence, $\phi_i(c_{j_2})$ is least among all coatoms above $x$.
However, this contradicts $x < c_{j_1}, c_{j_2}$ with $j_1 < j_2$.
So, it must be that $F \cap \phi_i(c_{j_2}) \neq \varnothing$.
Take $b \in F \cap \phi_i(c_{j_2})$ and let $c_k$ be the coatom obtained by negating position $b$ in $c_{j_2}$.
Futhermore, let $d$ be the sign vector obtained by making position $b$ equal to $0$ in $c_{j_2}$.
If $b < i$, then $\phi_i(c_k) \subseteq (\phi_i(c_{j_2}) \setminus \{b\}) \cup \{b-1\}$.
If $b > i$, then $\phi_i(c_k) \subseteq (\phi_i(c_{j_2}) \setminus \{b\}) \cup \{b+1\}$.
In any case $\phi_i(c_k) <_i \phi_i(c_{j_2})$ and so $k < j_2$.
Also, $d$ is a coatom in $[e_i, c_{j_2}]$ and $x \leq d < c_k$.
Therefore (RCO2) holds and the lemma is proven.
\end{proof}

\begin{lemma}
For any $0 \leq m \leq n-1$ the simplicial complex $\Delta_{n,m}$ is a normal $(n-1)$-pseudomanifold.
\label{lem:pseudo}
\end{lemma}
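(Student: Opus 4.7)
The plan is to verify the three defining conditions (NP1), (NP2), and (NP3) directly, drawing on Lemmas~\ref{lem:simplicialPoset} and~\ref{lem:shellable}. Condition (NP1) is immediate from Lemma~\ref{lem:simplicialPoset}: since $P_{n,m}\cup\{\hat 0\}$ is pure with maximal chains of length $n$, every maximal chain of $P_{n,m}$ has length $n-1$ and thus contributes a facet of dimension $n-1$ to $\Delta_{n,m}$.

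For (NP2), I would analyze a codimension-one face by its unique missing weight. A face of dimension $n-2$ is a chain of $n-1$ sign vectors in $P_{n,m}$ realizing all weights in $\{1,2,\ldots,n\}$ except some value $i$, and extending to a facet amounts to slotting in a single weight-$i$ sign vector. When $1 < i < n$ the two neighbors $\omega_{i-1} < \omega_{i+1}$ in the chain bound an interval sitting inside a Boolean lattice of rank $\wt(\omega_{i+1})$ by Lemma~\ref{lem:simplicialPoset}; this forces exactly two candidate insertions, and both satisfy $\var \leq m$ because zeroing entries of $\omega_{i+1}$ cannot increase its sign variation. When $i = 1$ the two candidates are the two weight-one sign vectors below $\omega_2$, each of variation $0$. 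When $i = n$ the two candidates are the two sign-completions of $\omega_{n-1}$, and any that fail the bound $\var \leq m$ are simply discarded, leaving at most two facets above the face.

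For (NP3), I would combine Lemma~\ref{lem:shellable} with the standard fact that a pure shellable complex of positive dimension is connected: in any shelling $F_1, \ldots, F_t$ each intersection $F_j \cap \bigcup_{i<j} F_i$ is pure and nonempty of dimension $\dim F_j - 1 \geq 0$, so consecutive facets share a vertex and the complex is path connected. Given $F \in \Delta_{n,m}$ with $\dim F < n-3$, condition (NP1) gives that $\link_{\Delta_{n,m}}(F)$ is pure of dimension $n-2-\dim F \geq 2$, and Lemma~\ref{lem:shellable} supplies shellability, so the link is connected.

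The only subtle case I anticipate is $i = n$ in (NP2): sign-completing $\omega_{n-1}$ may push the variation past $m$, so the count of facets above the face can drop below two. This is not an obstruction, since the pseudomanifold axiom requires only \emph{at most} two facets; such codimension-one faces are precisely the ones that will lie on the boundary of the pseudomanifold, which matches the anticipated conclusion that $\Gr^{\var\leq m}_{1,n}$ is a PL manifold with boundary.
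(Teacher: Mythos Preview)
Your proposal is correct and follows essentially the same route as the paper's proof: purity from Lemma~\ref{lem:simplicialPoset} for (NP1), the missing-weight case analysis (with the simplicial poset structure handling $i<n$ and the ``at most two sign-completions'' argument handling $i=n$) for (NP2), and Lemma~\ref{lem:shellable} plus ``pure shellable of positive dimension implies connected'' for (NP3). Your treatment is slightly more detailed in that you explicitly note why the inserted sign vectors in the $i<n$ cases still satisfy $\var\leq m$, and you spell out the connectivity argument for shellable complexes, but the structure and ideas are identical to the paper's.
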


\begin{proof}
Any maximal chain in $P_{n,m}$ will have length $n-1$.
So, $\Delta_{n,m}$ is a pure $(n-1)$-dimensional simplicial complex and (NP1) is verified.

Any $(n-2)$-dimensional face in $\Delta_{n,m}$ corresponds to a chain of length $n-2$ in $P_{n,m}$.
Such a chain of length $n-2$ will be missing an element of a single weight $r$ for some $1 \leq r \leq n$.
If $1 \leq r < n$, then there are exactly two ways to complete $C$ to a maximal chain since by Lemma~\ref{lem:simplicialPoset} the poset $P_{n,m} \cup \{\hat{0}\}$ is a simplicial poset.
When $r = n$, we find there are at most two ways to complete $C$ to a maximal chain by considering the sign vector of weight $n-1$ in $C$ which is covered by at most two elements.
Hence, (NP2) is verified.

To verify (NP3) we may appeal to Lemma~\ref{lem:shellable} to conclude the necessary links are connected since they are pure shellable complexes of dimension at least $1$.
Thus, the proof of the lemma is complete.
\end{proof}

\begin{theorem}
For any $0 \leq m \leq n-1$ the simplicial complex $\Delta_{n,m}$ is a combinatorial $(n-1)$-manifold.
Thus, $\Gr^{\var \leq m}_{1,n}$ is a PL manifold.
\label{thm:manifold}
\end{theorem}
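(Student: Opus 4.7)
The plan is to derive Theorem~\ref{thm:manifold} by combining Lemmas~\ref{lem:shellable} and~\ref{lem:pseudo} with the general principle, recalled in the combinatorial topology subsection, that a shellable pseudomanifold is a PL-sphere or a PL-ball. In other words, essentially all of the work has already been done: one only needs to verify the link condition defining a combinatorial manifold face by face.

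First I would fix a nonempty face $F \in \Delta_{n,m}$ and split on the codimension of $F$. When $F$ is a facet (codimension $0$), the link is empty and the condition $\|\link_{\Delta_{n,m}}(F)\|$ is a PL sphere or ball of dimension $-1$ is vacuously satisfied. When $\dim(F) = n-2$, condition (NP2) from Lemma~\ref{lem:pseudo} implies $\link_{\Delta_{n,m}}(F)$ consists of one or two isolated vertices; this is a $0$-ball or $0$-sphere, as required.

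The substantive case is $\dim(F) \leq n-3$. Because $\Delta_{n,m}$ is a normal $(n-1)$-pseudomanifold (Lemma~\ref{lem:pseudo}) and $F$ has codimension at least $2$, the fact recorded in Section~3.1 gives that $\link_{\Delta_{n,m}}(F)$ is itself a pseudomanifold, and it is pure of the correct dimension $n - \dim(F) - 2$ since $\Delta_{n,m}$ is pure and simplicial-poset-like by Lemma~\ref{lem:simplicialPoset}. Lemma~\ref{lem:shellable} says this link is shellable. Invoking the earlier remark that a shellable pseudomanifold is a PL-sphere or a PL-ball then shows $\|\link_{\Delta_{n,m}}(F)\|$ is of the required form.

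Putting these three cases together yields that $\Delta_{n,m}$ is a combinatorial $(n-1)$-manifold. For the second assertion, $\Gr^{\var \leq m}_{1,n}$ was built as a regular CW complex with closure poset $P_{n,m}$, so it is homeomorphic to $\|\Delta(P_{n,m})\| = \|\Delta_{n,m}\|$, i.e.\ to its own barycentric subdivision, which is thus a PL manifold. The only real obstacle is having the right shelling order underlying Lemma~\ref{lem:shellable}; once that lemma is granted, the remainder is routine bookkeeping across codimensions and an appeal to the standard shellable-pseudomanifold implication.
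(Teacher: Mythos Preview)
Your proposal is correct and follows essentially the same approach as the paper: handle the codimension-$0$ and codimension-$1$ links directly, and for smaller faces combine Lemma~\ref{lem:pseudo} (link is a pseudomanifold) with Lemma~\ref{lem:shellable} (link is shellable) to conclude the link is a PL sphere or ball. The paper's proof omits the trivial facet case and the explicit remark about the barycentric subdivision, but otherwise the arguments coincide.
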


\begin{proof}
If $F \in \Delta_{n,m}$ with $\dim(F) = n-2$, then $\|\link_{\Delta_{n,m}}(F)\|$ is either a single point or two discrete points.
This means we have either a $0$-dimensional PL ball or PL sphere.
If $F \in \Delta_{n,m}$ with $\dim(F) < n-2$, then $\link_{\Delta_{n,m}}(F)$ is a pseudomanifold by Lemma~\ref{lem:pseudo}.
Also, $\link_{\Delta_{n,m}}(F)$ is shellable by Lemma~\ref{lem:shellable}.
It follows that  $\|\link_{\Delta_{n,m}}(F)\|$ is a PL ball or PL sphere of the appropriate dimension.
Therefore, $\Delta_{n,m}$ is a combinatorial $(n-1)$-manifold and its geometric realization $\Gr^{\var \leq m}_{1,n}$ is a PL manifold.
\end{proof}

\subsection{Homotopy type}

In this subsection we will show that $\Gr^{\var \leq m}_{1,n}$ is homotopy equivalent to $\mathbb{RP}^m$.
Our main tool to do this is Forman's discrete Morse theory~\cite{Forman}.
We will make use of Chari's approarch to discrete Morse theory using matchings in the Hasse diagram of the closure poset of a regular CW complex~\cite{Chari}.
So, we assume throughout that all CW complexes are regular.
For any poset $P$ we consider its Hasse diagram as a directed graph with directed edge $b \to a$ for each cover relation $a \lessdot b$.
An \emph{acyclic (perfect) matching} is a (perfect) matching of the Hasse diagram such that the directed graph obtain by reversing the orientation of each edge in the matching is a directed acyclic graph.

In a CW complex we use $\sigma^{(d)}$ denote a $d$-dimensional cell.
Let $f$ be a function which assigns a real number to each cell in a CW complex.
The function $f$ is a \emph{discrete Morse function} provided both
\begin{align*}
|\{\tau^{(d-1)} \subset \overline{\sigma^{(d)}} : f(\tau^{(d-1)} ) \geq f(\sigma^{(d)}) \} | &\leq 1\\
|\{\sigma^{(d)} \subset \overline{\tau^{(d+1)}}: f(\tau^{(d+1)} ) \leq f(\sigma^{(d)}) \} | &\leq 1
\end{align*}
for every $d$-cell $\sigma^{(d)}$.
When both these cardinalities are zero the $d$-cell $\sigma^{(d)}$ is called a \emph{critical cell}.
At most one of these cardinalities can be nonzero for a given cell, and hence a discrete Morse function defines a (partial) matching of the cells.
Moreover, any matching which is an acyclic matching in the closure poset arises from some discrete Morse function.
More generally, whenever one has an acyclic matching in a poset there exists a function which is decreasing along the matching edges and increasing along the rest of the poset.
To see this one may reverse the matching edges to obtain a directed acyclic graph.
Then treating this directed acyclic graph as a poset consider a linear extension.
The desired function can be gotten by assigning to each element of the original poset its position in this chosen linear extension.

Given a CW complex $X$ and discrete Morse function $f$, For any $c \in \R$ we set 
\[X(c) =\bigcup_{\substack{ \sigma \in X \\ f(\sigma) \leq c}} \bigcup_{\tau \leq \sigma} \tau.\]
If $a < b$ are real numbers such that $[a,b]$ contains no critical values of $f$, then $X(b) \searrow X(a)$~\cite[Theorem 3.3]{Forman}.
Here $\searrow$ denotes \emph{collapsing} which gives what is known as a simple homotopy equivalence.

\begin{lemma}
For any $0 \leq m < n-1$ there exists an acyclic perfect matching of $\{x \in P_{n,m} : \sgn(e_1) \leq x\}$.
\label{lem:matching}
\end{lemma}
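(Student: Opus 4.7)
My plan is to build the matching by a single-coordinate toggle rule. Set $\Omega := \{x \in P_{n,m} : \sgn(e_1) \leq x\}$.

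For $\omega \in \Omega$, call a position $i \in \{2, \dots, n\}$ a \emph{flip of $\omega$} if $\omega_i \neq 0$ and has sign opposite to the last nonzero entry of $\omega$ at a position $<i$, and otherwise (including when $\omega_i = 0$) call it a \emph{non-flip}. The number of flips of $\omega$ equals $\var(\omega) \leq m$, and since $m < n-1$ prevents every position in $\{2, \dots, n\}$ from being a flip, I may set $t(\omega)$ equal to the largest non-flip position. The matching pairs $\omega$ with $\omega'$ obtained by toggling coordinate $t(\omega)$: when $\omega_{t(\omega)} = 0$ set $\omega'_{t(\omega)}$ equal to the sign of the last nonzero entry of $\omega$ preceding position $t(\omega)$, and when $\omega_{t(\omega)} \neq 0$ set $\omega'_{t(\omega)} = 0$.

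A direct local check, carried out in both toggle directions, shows that the flip/non-flip status of every position other than $t(\omega)$ is unchanged by the toggle, since positions below $t(\omega)$ are not even affected and the flip status of each position $i > t(\omega)$ depends only on the alternation of signs anchored at $\omega_{t(\omega)}$ or just before it. Consequently $t(\omega') = t(\omega)$, the rule is involutive, $\omega' \in \Omega$, and $\omega, \omega'$ form a covering pair. Every element of $\Omega$ is matched exactly once, so the matching is perfect.

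The main obstacle is acyclicity of the matching. My plan is to rule out directed cycles in the modified Hasse diagram by a reconstruction argument. In a hypothetical cycle, each matched up-edge writes a nonzero sign at position $t(\omega)$ whose value is dictated by the prefix of $\omega$, while each non-matched down-edge zeroes out some other coordinate. For the cycle to close, the matched and non-matched events at each coordinate must balance in number, forcing a rigid alternating pattern of toggles, and the signs recorded at matched events must agree with what the current prefix of the source demands. But any non-matched edge that modifies the "last nonzero entry before $t$" portion of the prefix changes the sign that a subsequent matched event must record, and I would show that after walking around the cycle the required signs cannot be reconciled with the toggle history, yielding the contradiction.

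The subtle step is tracking how $t(\omega)$ shifts along non-matched down-edges: deleting an entry at position $k$ can either raise $t$ (when $k$ lies within the trailing flip-run, so the alternation re-anchors one step further out) or lower $t$ (when $k$ lies in the prefix and disables a previously non-flip position by changing its preceding nonzero entry). Handling both shift directions and showing that across a whole cycle the prefix-dependent signs cannot be consistently reproduced is the heart of the argument, and the hypothesis $m < n-1$ is used throughout to keep $t(\cdot)$ defined so that the analysis does not break down at any step.
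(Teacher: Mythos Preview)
Your matching is genuinely different from the paper's: you toggle at the \emph{largest} non-flip position $t(\omega)$, whereas the paper toggles at position $j+1$, where $j$ is the length of the maximal alternating prefix $(+,-,+,-,\ldots)$ of $\omega$ (so essentially the \emph{smallest} non-flip position). Your verification that the toggle is an involution exchanging covering pairs in $\Omega$ is correct, so you do obtain a perfect matching.

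The gap is in acyclicity. What you have written is not a proof but an outline with explicit holes: phrases such as ``I would show that\ldots'' and ``the heart of the argument'' signal that the key step has not been carried out. Your proposed strategy---tracking the positions $t(\omega)$ and the prefix-dependent signs around a hypothetical cycle and arguing they cannot be reconciled---is not obviously workable, because (as you yourself note) $t(\omega)$ can move in both directions along non-matched down-edges, and you give no mechanism that forces a contradiction. By contrast, the paper's choice of toggle position makes acyclicity fall out in two lines: the length $j$ of the alternating prefix can never increase along a directed path (a down-step at a position $\le j$ strictly shortens it), and a down-step at a position $>j+1$ creates a zero that can never be refilled. This is precisely the payoff of toggling at the \emph{first} available position rather than the last. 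If you want to rescue your matching, one route is to observe that $\var$ is preserved by matched edges and non-increasing on down-edges, so it is constant on any cycle; then argue that a $\var$-preserving down-edge at a flip position $k<t$ forces $t$ to strictly decrease (and a down-edge at a non-flip $k<t$ leaves $t$ fixed but keeps $\omega_t\neq 0$, blocking the next up-step), while down-edges at positions $>t$ always drop $\var$. But you would need to actually carry this out carefully; as written, acyclicity is asserted, not proved.
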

\begin{proof}
Let $Q = \{x \in P_{n,m} : \sgn(e_1) \leq x\}$.
We will define $\phi:Q \to Q$.
Take any $ \omega = (\omega_1, \omega_2, \dots, \omega_n) \in Q$ and let $j = \max \{i : \var((\omega_1, \omega_2, \dots, \omega_i)) = i-1\}$.
Thus, the quantity $j$ gives the length of the maximal prefix of the form $(+,-,+,-, \cdots)$.
Since $\omega \in Q$, and $m < n-1$ it must be that $j < n$.
We define $\phi(\omega) = (\omega'_1, \omega'_2, \dots, \omega'_n)$ where $\omega'_i = \omega_i$ for $i \neq j+1$ and
\[\omega'_{j+1} = \begin{cases} \omega_j & \omega_{j+1} = 0;\\ 0 & \omega_{j+1} \neq 0. \end{cases}\]
We see that $\var(\omega) = \var(\phi(\omega))$ and $\phi(\phi(\omega)) = \omega$.
Also, either $\omega \lessdot \phi(\omega)$ or $\phi(\omega) \lessdot \omega$.
It follows that $\{\{\omega, \phi(\omega)\} : \omega \in Q\}$ is a perfect matching of the Hasse diagram of $Q$.
An example of the matching produced in the case $n=3$ and $m=1$ is shown in Figure~\ref{fig:acyclic}. 
It remains to check it is acyclic.

Consider walks in the Hasse diagram with orientations on perfect matching edges reversed.
That is, walks which go up in the partial order on matching edges and go down on edges not in the matching.
Take some $ \omega = (\omega_1, \omega_2, \dots, \omega_n) \in Q$ and again let $j = \max \{i : \var((\omega_1, \omega_2, \dots, \omega_i)) = i-1\}$.
This quantity $j$ measures the maximal prefix of the form $(+,-,+,-, \cdots)$, and the length of this prefix cannot become any longer moving along directed paths.
Let $\tau = (\tau_1, \tau_2, \dots, \tau_n)$ be obtained from taking a down step from $\omega$.
This means that there exists $k \neq j+1$ such that $\tau_i = \omega_i$ for $i \neq k$ and $\tau_k = 0$ while $\omega_k \neq 0$.
If $k \leq j$, then $k-1 = \max \{i : \var((\tau_1, \tau_2, \dots, \tau_i)) = i-1\}$.
In this case we can never return to $\omega$ along a directed path since the length of our maximal prefix of the form $(+,-,+,-, \cdots)$ has gotten strictly smaller.
If $k > j+1$, then $\tau_k =0$ while $\omega_k \neq 0$, but the $k$th entry will remain zero along any directed path.
Therefore we have an acyclic perfect matching of $Q$.
\end{proof}

\begin{figure}
    \centering
    \begin{tikzpicture}
    \node (a) at (-3,2) {$(+,+,+)$};
    \node (b) at (-1,2) {$(+,+,-)$};
    \node (d) at (3,2) {$(+,-,-)$};
    
    \node (e) at (-5,0) {$(+,+,0)$};
    \node (f) at (-3,0) {$(+,0,+)$};
    \node (h) at (1,0) {$(+,-,0)$};
    \node (i) at (3,0) {$(+,0,-)$};
    
    \node (k) at (-2,-2) {$(+,0,0)$};
    
    \draw (a) to (e);
    \draw[line width=3pt,red] (a) to (f);
    
    \draw[line width=3pt,red] (b) to (i);
    \draw (b) to (e);
    
    \draw[line width=3pt,red] (d) to (h);
    \draw (d) to (i);

    \draw[line width=3pt,red] (k) to (e);
    \draw (k) to (f);
    \draw (k) to (h);
    \draw (k) to (i);

    \end{tikzpicture}
    \caption{An acyclic perfect matching.}
    \label{fig:acyclic}
\end{figure}
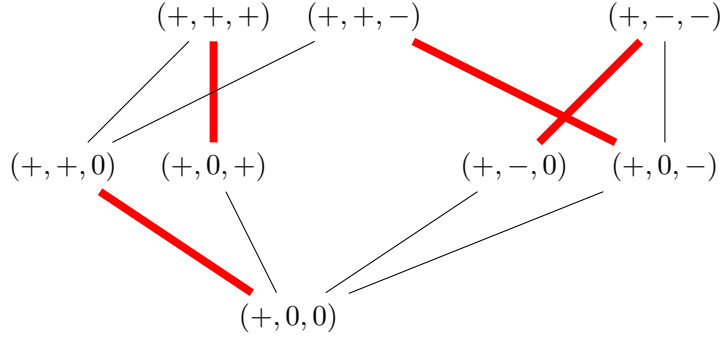

\begin{theorem}
For any $0 \leq m \leq n-1$ the space $\Gr^{\var \leq m}_{1,n} \searrow \mathbb{RP}^m$.
\label{thm:collaspe}
\end{theorem}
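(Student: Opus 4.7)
My plan is to induct on $n-m$. The base case is $n = m+1$: here every $v \in \R^n$ satisfies $\var(v) \leq n-1 = m$, so $\Gr^{\var \leq m}_{1,n} = \Gr_{1,n} = \mathbb{RP}^{n-1} = \mathbb{RP}^m$ and nothing is to be shown.

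For the inductive step, assume $0 \leq m < n-1$. By our convention that the first nonzero entry of a sign vector is $+$, the set $Q := \{x \in P_{n,m} : \sgn(e_1) \leq x\}$ is precisely $\{\omega \in P_{n,m} : \omega_1 = +\}$, and its complement $P_{n,m} \setminus Q$ is $\{\omega \in P_{n,m} : \omega_1 = 0\}$. I would invoke Lemma~\ref{lem:matching} to obtain an acyclic perfect matching $M$ of $Q$, and then extend $M$ to a matching of the full Hasse diagram of $P_{n,m}$ by leaving every $\omega$ with $\omega_1 = 0$ unmatched. These unmatched vertices are isolated in the modified digraph, so the extended matching is still acyclic. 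By Chari's reformulation of discrete Morse theory, the matching produces a discrete Morse function $f$ on the regular CW complex $\Gr^{\var \leq m}_{1,n}$ whose critical cells are exactly those $\omega$ with $\omega_1 = 0$. I would use the linear-extension recipe indicated in the excerpt to choose $f$ so that the critical cells receive the smallest values.

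Let $C$ denote the subcomplex of $\Gr^{\var \leq m}_{1,n}$ consisting of cells with $\omega_1 = 0$; it is closed under taking faces, so it really is a subcomplex. The map $(0,\omega_2,\ldots,\omega_n) \mapsto (\omega_2,\ldots,\omega_n)$ is a poset isomorphism $C \cong P_{n-1,m}$, since leading zeros do not change $\var$, and geometrically $C$ is the intersection of $\Gr^{\var \leq m}_{1,n}$ with the projective hyperplane $\{v_1 = 0\}$ and is homeomorphic to $\Gr^{\var \leq m}_{1,n-1}$. Setting $c$ just above the largest critical value makes $X(c) = C$ and $X(\infty) = \Gr^{\var \leq m}_{1,n}$; Forman's collapsing theorem then gives $\Gr^{\var \leq m}_{1,n} \searrow C \cong \Gr^{\var \leq m}_{1,n-1}$. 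The inductive hypothesis supplies $\Gr^{\var \leq m}_{1,n-1} \searrow \mathbb{RP}^m$, and composing the two collapses finishes the proof.

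The main technical point is verifying that the matching of Lemma~\ref{lem:matching} on $Q$ really does induce a collapse onto the subcomplex $C$ of cells with $\omega_1 = 0$, rather than only a homotopy equivalence to some abstract complex with one cell per critical cell. This is handled cleanly by the extension above: every matched pair lies entirely inside $Q$, so each elementary collapse removes two cells with $\omega_1 = +$ and leaves $C$ pointwise fixed. Once this is in place, the inductive step is essentially bookkeeping, and the whole argument reduces to the careful choice of matching carried out in Lemma~\ref{lem:matching}.
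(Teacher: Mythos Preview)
Your proposal is correct and follows essentially the same route as the paper: both use Lemma~\ref{lem:matching} to match every cell with $\omega_1 = +$, build a discrete Morse function whose critical cells are exactly those with $\omega_1 = 0$ (the paper does this explicitly via $f(\omega)=\wt(\omega)-1$ on the complement and a large shift of $g$ on $Q$), invoke Forman's collapsing theorem to collapse onto the subcomplex identified with $\Gr^{\var\leq m}_{1,n-1}$, and then iterate. One small wording issue: the unmatched vertices are not literally \emph{isolated} in the modified digraph (they still have down-edges), but your acyclicity conclusion is correct because any directed cycle must use a matching edge and hence stays inside $Q$.
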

\begin{proof}
We can identify $\Gr^{\var \leq m}_{1,n-1}$ with $\{[x_1,x_2, \dots, x_n] \in \Gr^{\var \leq m}_{1,n}: x_1 = 0\}$.
We will show that  $\Gr^{\var \leq m}_{1,n} \searrow \Gr^{\var \leq m}_{1,n-1}$, and the theorem will follow by iteration.
Let $Q = \{x \in P_{n,m} : \sgn(e_1) \leq x\}$.
By Lemma~\ref{lem:matching} it follows we can find an acyclic perfect matching of $Q$.
We can then find a function $g: Q \to \R$  decreasing along the matching edges and increasing along the rest of $Q$.
Next define $f:P_{n,m} \to \R$ such that
\[f(\omega) = \begin{cases} g(\omega) + C & \omega \in Q \\ \wt(\omega) - 1 & \omega \not\in Q \end{cases}\]
where $C \in \R$ is such that $g(\omega) + C > n$ for all $\omega \in Q$.
We see that $f$ is a discrete Morse function for $X = \Gr^{\var \leq m}_{1,n}$.
Furthermore, $X(n) = \Gr^{\var \leq m}_{1,n-1}$ and there are no critical values in $[n, \infty)$.
Therefore, $\Gr^{\var \leq m}_{1,n} \searrow \Gr^{\var \leq m}_{1,n-1}$.
Iterating we find that $\Gr^{\var \leq m}_{1,n} \searrow \Gr^{\var \leq m}_{1,m+1}$ and $\Gr^{\var \leq m}_{1,m+1} = \Gr_{1,m+1} = \mathbb{RP}^m$.
\end{proof}

Since the homology of real projective space is known, an immediate consequence of the above theorem is that
\begin{equation}
 H_i(\Gr^{\var \leq m}_{1,n}, \k) = \begin{cases} \k & i = 0\\ \k & i = m \text{ and } m \text{ is odd } \\ 0 & \text{otherwise} \end{cases}
 \label{eq:homology}
\end{equation}
for any field $\k$ such that $\chark \neq 2$.

\subsection{Cohen-Macaulayness}

In this subsection we will investigate the Cohen-Macaulayness of the simplicial complexes $\Delta_{n,m}$.
For a reference on Cohen-Macaulay simplicial complexes we recommend~\cite{COAC}.
Let $\k$ be a field.
The simplicial complex $\Delta$ is \emph{Cohen-Macaulay over $\k$} if for all $F \in \Delta \cup \{\varnothing\}$
\[\tilde{H}_i(\link_{\Delta}(F), \k) = 0\]
for $0 \leq i \leq \dim \link_{\Delta}(F) - 1$.
We will abbreviate Cohen-Macaulay over $\k$ by $\k$-CM.
A complex being $\k$-CM depends only on the field $\k$ and the geometric realization (not on the particular triangulation).
Thus, $\Delta$ is $\k$-CM for any $\k$ whenever $\| \Delta \|$ is a ball or sphere.
So, being $\k$-CM can be thought of as a generalization of being a ball or sphere.
Also, a complex being $\k$-CM is equivalent to the corresponding Stanley-Reinser face ring being Cohen-Macaulay.
Our definition of $\k$-CM  is sometimes referred to as \emph{Reisner's criterion}~\cite{Reisner}.

\begin{corollary}
Let $\k$ be a field with $\chark \neq 2$ and $0 \leq m \leq n-1$.
The simplicial complex $\Delta_{n,m}$ is $\k$-CM if and only if $m$ is even or $m = n-1$.
\label{cor:CM}
\end{corollary}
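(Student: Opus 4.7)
The plan is to reduce Reisner's criterion to a single cohomological check at the empty face, then read off the answer from the homology of real projective space.

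First I would observe that Lemma~\ref{lem:shellable} already does most of the work: for every nonempty face $F \in \Delta_{n,m}$ the link $\link_{\Delta_{n,m}}(F)$ is shellable. Since a pure shellable simplicial complex is $\k$-CM over any field (its Stanley--Reisner ring is Cohen--Macaulay), the Reisner condition
\[\tilde{H}_i(\link_{\Delta_{n,m}}(F),\k) = 0 \quad \text{for } 0 \leq i \leq \dim\link_{\Delta_{n,m}}(F) - 1\]
is automatically satisfied for every nonempty $F$. So the only remaining condition to test is the one coming from $F = \varnothing$, where $\link_{\Delta_{n,m}}(\varnothing) = \Delta_{n,m}$ has dimension $n-1$, namely
\[\tilde{H}_i(\Delta_{n,m},\k) = 0 \quad \text{for } 0 \leq i \leq n-2.\]

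Next I would compute $\tilde{H}_i(\Delta_{n,m},\k)$ using the results already established. Because $\|\Delta_{n,m}\| \cong \Gr^{\var \leq m}_{1,n}$ (as $\Delta_{n,m}$ is the barycentric subdivision of the regular CW complex on $\Gr^{\var \leq m}_{1,n}$), Theorem~\ref{thm:collaspe} and the homology formula~\eqref{eq:homology} give, for any field $\k$ with $\chark \neq 2$,
\[\tilde{H}_i(\Delta_{n,m},\k) = \begin{cases} \k & i = m \text{ and } m \text{ is odd,} \\ 0 & \text{otherwise.} \end{cases}\]

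Finally I would combine these two observations. The remaining Reisner condition fails exactly when there is some index $0 \leq i \leq n-2$ with $\tilde{H}_i(\Delta_{n,m},\k) \neq 0$. By the homology computation this happens precisely when $m$ is odd and $m \leq n-2$, i.e.\ when $m$ is odd and $m \neq n-1$. Taking the negation, $\Delta_{n,m}$ is $\k$-CM if and only if $m$ is even or $m = n-1$, which is the statement of the corollary. No step here is really an obstacle since all the heavy lifting has been done: the only subtlety is making sure Lemma~\ref{lem:shellable} covers the links of all nonempty faces (which it does, since by the join decomposition of a link and the fact that every interval $[\hat{0},\omega]$ in $P_{n,m}\cup\{\hat{0}\}$ is Boolean, the link of any chain reduces to a join of shellable upsets already handled in the proof of that lemma).
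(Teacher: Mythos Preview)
Your proof is correct and follows essentially the same two-step structure as the paper: verify Reisner's criterion for nonempty faces via properties of links, then handle $F=\varnothing$ using Theorem~\ref{thm:collaspe} and Equation~\eqref{eq:homology}. The only cosmetic difference is that for the nonempty-face step the paper invokes Theorem~\ref{thm:manifold} (links are PL balls or spheres, hence have the required homology vanishing), whereas you appeal directly to Lemma~\ref{lem:shellable} and the fact that pure shellable complexes are $\k$-CM; your route is slightly more direct but amounts to the same argument, since Theorem~\ref{thm:manifold} itself rests on Lemma~\ref{lem:shellable}.
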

\begin{proof}
By Theorem~\ref{thm:manifold} we know that $\| \link_{\Delta_{n,m}}(F)\|$ is a sphere or ball for all $F \in \Delta_{n,m}$.
Hence, the condition of homology to be $\k$-CM is satisfied for all $F \in \Delta_{n,m}$.
By Theorem~\ref{thm:collaspe} and its consequence Equation~(\ref{eq:homology}) we find that $\tilde{H}_i(\Delta_{n,m}, \k) = 0$ for all $0 \leq i \leq n-2$ if and only if $m$ is even or $m = n-1$.
\end{proof}

\section{Networks with one source}
\label{sec:meas}
We now turn our attention to networks with a single source.
We will show that boundary measurement Grassmannians for networks with a single source are bounded sign variation Grassmannians.
Here the boundary measurement matrix $B(N,T)$ will have a single row.
Hence, we can identify $B(N,T)$ with a vector in $\mathbb{R}^n$ or with an element of $\mathbb{RP}^{n-1}$.

\begin{figure}
    \centering
    \begin{tikzpicture}
    \draw[thick] (-3,0) circle (2cm);
    \node[draw,circle, scale=0.5] (u) at (-3.75,0) {};
    \draw[-{latex}] (-5,0) to (u);
    \draw[-{latex}] (u) to (-3,1);
    \draw[-{latex}] (u) to (-3,-1);
    \node at (-4.45,0.2) {$x_e$};
    \node at (-3.55,0.73) {$x_f$};
    \node at (-3.55,-0.73) {$x_g$};
    \node at (-5.25,0) {$k$};
    
    \draw[thick] (3,0) circle (2cm);
    \draw[-{latex}] (1,0) to (3,1);
    \draw[-{latex}] (1,0) to (3,-1);
    \node at (2.1,0.85) {$x_ex_f$};
    \node at (2.1,-0.85) {$x_ex_g$};
    \node at (0.75,0) {$k$};
    
     \draw[thick] (-3,-6) circle (2cm);
    \node[draw,circle,fill=black, scale=0.5] (u) at (-3.75,-6) {};
    \draw[-{latex}] (-5,-6) to[bend left = 60] (u);
    \draw[-{latex}] (-5,-6) to[bend right = 60] (u);
    \draw[-{latex}] (u) to (-2.5,-6);
    \node at (-4.4,-5.4) {$x_e$};
    \node at (-4.4,-6.6) {$x_f$};
    \node at (-2.8,-5.8) {$x_g$};
    \node at (-5.25,-6) {$k$};
    
    \draw[thick] (3,-6) circle (2cm);
    \draw[-{latex}] (1,-6) to (3,-6);
    \node at (2,-5.7) {$(x_e + x_f)x_g$};
    \node at (0.75,-6) {$k$};
    
    \draw[thick] (-3,-12) circle (2cm);
    \node[draw,circle, fill=black, scale=0.5] (u) at (-3.75,-12) {};
    \draw[-{latex}] (-5,-12) to (u);
    \draw[-{latex}] (-3,-11) to (u);
    \draw[-{latex}] (u) to (-3,-13);
    \node at (-4.45,-11.8) {$x_e$};
    \node at (-3.55,-11.27) {$x_f$};
    \node at (-3.55,-12.73) {$x_g$};
    \node at (-5.25,-12) {$k$};
    
    \draw[thick] (3,-12) circle (2cm);
    \draw[-{latex}] (3,-11) to (1.135,-11.25);
    \draw[-{latex}] (1,-12) to (3,-13);
    \node at (2.1,-10.85) {$x_f/x_e$};
    \node at (2.1,-12.85) {$x_ex_g$};
    \node at (0.75,-12) {$k$};
    \node at (1,-11.25) {$k'$};

    \end{tikzpicture}
    \caption{Reductions used in the proof of Lemma~\ref{lem:nointerior}.}
    \label{fig:reduction}
\end{figure}
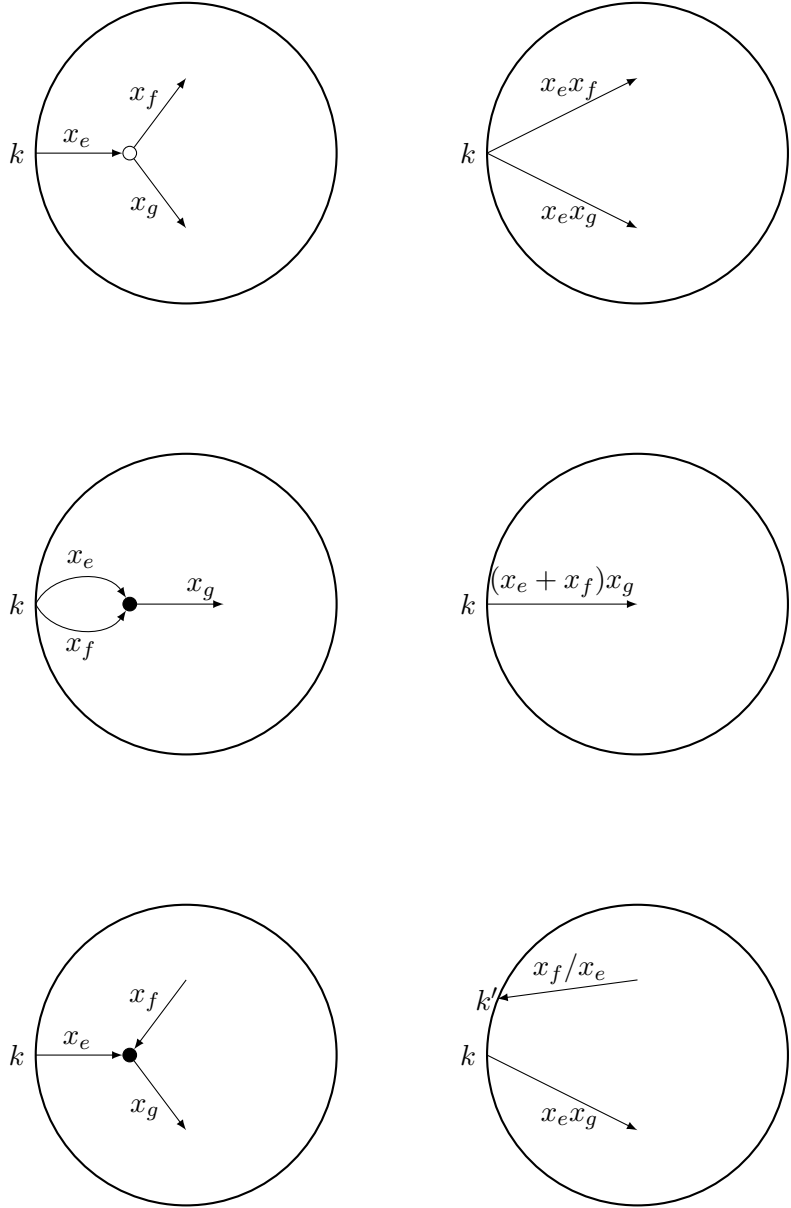

\begin{lemma}
If $(S,T)$ is a surface with chosen cuts and $N \in \Net^S_{1,n}$, then for some $n'$ there exists a directed network $\tilde{N} \in \Net^S_{1,n'}$ such that
\[\var(B(N,T)) = \var(B(\tilde{N},T))\]
and $\tilde{N}$ has no interior vertices.
\label{lem:nointerior}
\end{lemma}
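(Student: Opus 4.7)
The plan is to induct on the number of interior vertices of $N$, using the three local reductions depicted in Figure~\ref{fig:reduction} as the inductive step. The base case is trivial: if $N$ has no interior vertices, take $\tilde N = N$. For the inductive step, one first locates an interior vertex $v$ of $N$ adjacent to the source $k$ along some edge, and observes that $v$ must match one of the three patterns shown: either a white vertex with its incoming edge from $k$, a black vertex with both incoming edges from $k$, or a black vertex with one incoming edge from $k$ and one incoming edge from elsewhere. Such a $v$ always exists because $k$ is the unique source and $S$ has genus zero, so starting from $k$ and following a directed edge, the first interior vertex encountered will be adjacent to $k$ and fall into one of these three cases up to color.

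Next one verifies that each reduction preserves $\var(B(N,T))$. For reductions~(1) and~(2) the boundary measurement vector is preserved exactly: in the formula $B_{1j} = \sum_{P : k \pathto j} (-1)^{r_P + 1} \wt(P)$, every path $P$ through $v$ in $N$ corresponds bijectively to a path in the reduced network with the same total weight and the same rotation number $r_P$, since the replacement exactly implements the factorization of the common prefix $x_e$ (for reduction~1) or of the common suffix with summed branches $(x_e + x_f)x_g$ (for reduction~2). For reduction~(3) the $x_e$ factor that was common to all old paths entering $v$ via the $k$-edge is redistributed: the new weights $x_f/x_e$ and $x_e x_g$ are chosen so that the resulting boundary measurement vector agrees with the original up to positive rescaling of individual coordinates. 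Since $\var$ depends only on the sign pattern of a vector, coordinatewise positive rescaling leaves it unchanged.

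Termination is immediate since each reduction removes at least one interior vertex, so after finitely many applications we obtain a network $\tilde N \in \Net^S_{1,n'}$ (for some $n'$, possibly distinct from $n$) with no interior vertices and $\var(B(\tilde N, T)) = \var(B(N,T))$.

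The main obstacle will be reduction~(3), where both the edge-weight structure and the source-adjacency structure change in a non-obvious way. One must carefully track the bijection between paths $P$ in $N$ and the corresponding paths $P'$ in the reduced network, and verify that the signs $(-1)^{r_P + 1}$ and $(-1)^{r_{P'} + 1}$ agree up to a factor uniform across each coordinate of the boundary measurement vector. I expect this to follow from a case analysis on how $v$ sits relative to the chosen cuts $T$, using the genus-zero assumption to compute the rotation numbers of the closed curves $C(P)$ built from paths in the new network. Once this verification is in place, the induction closes and the lemma follows.
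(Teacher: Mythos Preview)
Your overall strategy matches the paper's: induct on the number of interior vertices, and at each step remove an interior vertex adjacent to the source $k$ via one of the three local reductions in Figure~\ref{fig:reduction}. Your treatment of reductions~(1) and~(2) is correct and agrees with the paper: the boundary measurement vector is preserved exactly.

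The gap is in reduction~(3). You anticipate a bijection between paths in $N$ and paths in the reduced network $N'$, with rotation-number signs matching up to a coordinatewise factor. No such bijection exists. In $N$, a path $k \pathto i$ may traverse the edge $f=(v,u)$ any number of times, looping back through $u$; in $N'$ the vertex $u$ is gone and $f$ has been redirected to a \emph{new} boundary sink $k'$, so paths $k \pathto i$ in $N'$ cannot loop at all. The correct mechanism, which the paper uses, is to enumerate paths $k \pathto i$ in $N$ by the number of times they pass through $f$: each pass contributes a multiplicative factor equal (up to sign) to $c_{k'}$, and each extra cycle through $u$ flips the rotation-number parity. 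This yields the geometric series
\[
b_i \;=\; c_i - c_i c_{k'} + c_i c_{k'}^2 - \cdots \;=\; \frac{c_i}{1+c_{k'}},
\]
so $\sgn(b_i)=\sgn(c_i)$ for every $i\in I$. One then checks that the newly inserted entry $c_{k'}$ is nonnegative (because $k'$ is placed adjacent to $k$ on the same boundary component, with no sources between them) and sits next to $c_k=1$, so inserting it creates no new sign changes. Hence $\var(B(N,T))=\var(B(N',T))$. Your proposed ``case analysis on how $v$ sits relative to the cuts $T$'' and appeal to the genus-zero hypothesis are unnecessary here; the argument is purely the geometric-series identity plus the placement of $k'$.
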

\begin{proof}
We may assume $N$ has at least one interior vertex, otherwise we may take $\tilde{N} = N$.
It suffices to produce a directed network $N'$ such that $\var(N) = \var(N')$ and $N'$ has one fewer interior vertex than $N$ as the lemma will then follow by induction.
Let $k$ be the unique source of $N$.
We will consider cases depending on how edges incident on $k$ are connected to the rest of $N$.
Figure~\ref{fig:reduction} shows locally the reductions from $N$ to $N'$ in each case.

First we consider the case $e = (k,u)$ is an edge in $N$ where $u$ is a white vertex.
Let $f = (u,v)$ and $g = (u,w)$ be the other two edges incident on $u$.
To construct $N'$ we remove the vertex $u$ as well as the edges $e, f$, and $g$.
We then add the edge $f'=(k,v)$ with weight $x_{f'} = x_e x_f$ and the edge $g'=(k,w)$ with weight $x_{g'} = x_e x_g$.
In this case $B(N,T) = B(N',T)$.
So, $\var(B(N,T)) = \var(B(N',T))$.

Next we consider the case when $e = (k,u)$ and $f = (k,u)$ are edges in $N$ terminating at a black vertex $u$.
Let $g = (u,v)$ be the other edge incident on $u$.
We construct $N'$ by first removing the vertex $u$ along with the edges $e, f$, and $g$.
We then add the edge $e' = (k,v)$ and with weight $x_{e'} = (x_e + x_f)x_g$.
In this case we also find $B(N,T) = B(N',T')$.
So, $\var(B(N,T)) = \var(B(N',T'))$ again.

Lastly, we consider the case where $e = (k,u)$ is an edge in $N$ incident on a black vertex $u$ while $f=(v,u)$ and $g=(u,w)$ are the other edges incident on $u$ with $v \neq k$.
In this case we construct $N'$ by removing $u$ as well as the edges $e, f$, and $g$.
We then add a new sink $k'$ to the boundary of $S$ near $k$ as shown in Figure~\ref{fig:reduction}.
We also add edges $e' = (k,w)$ and $f' = (v,k')$ with weights $x_{e'} = x_ex_g$ and $x_{f'} = x_f/x_e$.
If $N \in \Net^S_{1,n}$, then $N' \in \Net^S_{1,n+1}$.
Let $I$ and $I' = I \cup \{k'\}$ be the set of boundary vertices of $N$ and $N'$ respectively.
We let $B(N,T) = [b_i : i \in I]$ and $B(N',T) = [c_i : i \in \tilde{I}]$.
Since $k$ is a source $b_k = c_k = 1 > 0$.
There are no sources of $N'$ strictly between $k$ and $k'$.
Also $k$ and $k'$ are on the same boundary component, and it follows that $c_{k'} \geq 0$.
For any $i \in I$ we can enumerate paths $k \pathto i$ by considering how many times the path traverses the edge $f$.
Notice that when a path traverses the edge $f$ it arrives at the vertex $u$, and the path must leave the vertex $u$ on the edge $g$.
Hence, for all $i \in I \setminus \{k\}$,
\[b_i = c_i - c_ic_{k'} + c_ic_{k'}^2 - \cdots = \frac{c_i}{1 + c_{k'}}\]
where the negative signs come from the rotation number increasing for a path which includes a cycle through $u$ in $N$ which is not present in $N'$.
In particular, we find that $\sgn(b_i) = \sgn(c_i)$.
So, we can conclude that $\var(B(N,T)) = \var(B(N',T))$ because $b_i$ and $c_i$ have the same sign for each $i \in I$.
\end{proof}

Given any surface with chosen cuts $(S,T)$, we have an abstract graph $\mathcal{G}_{(S,T)}$ whose vertices are the boundary components of $S$ and whose edges consist of pairs of boundary components connected by a cut.

\begin{figure}
    \centering
    \begin{tikzpicture}
    \draw[thick] (-6,1) -- (6,1);
    \draw[thick] (-6,0) -- (6,0);
    \draw[dotted] (-6,1)--(-6,0);
    \draw[dotted] (-2,1)--(-2,0);
    \draw[dotted] (2,1)--(2,0);
    \draw[dotted] (6,1)--(6,0);
    
    \node at (0,-0.15) {$k$};
    \node at (-1,1.15) {$x$};
    \node at (0,1.15) {$y$};
    \node at (1,1.15) {$z$};
    
    \node at (4,-0.15) {$k$};
    \node at (3,1.15) {$x$};
    \node at (4,1.15) {$y$};
    \node at (5,1.15) {$z$};
    
     \node at (-4,-0.15) {$k$};
    \node at (-5,1.15) {$x$};
    \node at (-4,1.15) {$y$};
    \node at (-3,1.15) {$z$};
    
    \draw (0,0) -- (3,1);
    \draw (0,0) -- (5,1);
    \draw (4,0) -- (6,0.4);
    \draw(4,0) -- (6,0.75);
    \draw (-4,0) -- (-1,1);
    \draw (-4,0) -- (1,1);
    \draw (-6,0.4) -- (-3,1);
    \draw (-6,0.75) -- (-5,1);

    \draw[thick] (0,-5) circle (3cm);
    \draw[thick] (0,-5) circle (1cm);
    \draw[dotted] (0,-2) to (0,-4);

    \node at (-0.45,-5.65) {$z$};
    \node at (-0.85,-5) {$y$};
    \node at (-0.5,-4.3) {$x$};
    \node at (0,-8.15) {$k$};

    \draw (0,-8) to[bend right = 90] (0,-3);
     \draw (0,-8) to[bend right = 90] (0,-2.5);
    \draw (0,-3) to[bend right = 60] (-0.65,-4.25);
    \draw (0,-2.5) to[bend right = 45] (-2,-5);
     \draw (-2,-5) to[bend right = 60] (-0.6,-5.8);

    \end{tikzpicture}
    \caption{A situation illustrating part of the proof of Lemma~\ref{lem:onesourcebound}.}
    \label{fig:universal_cover}
\end{figure}
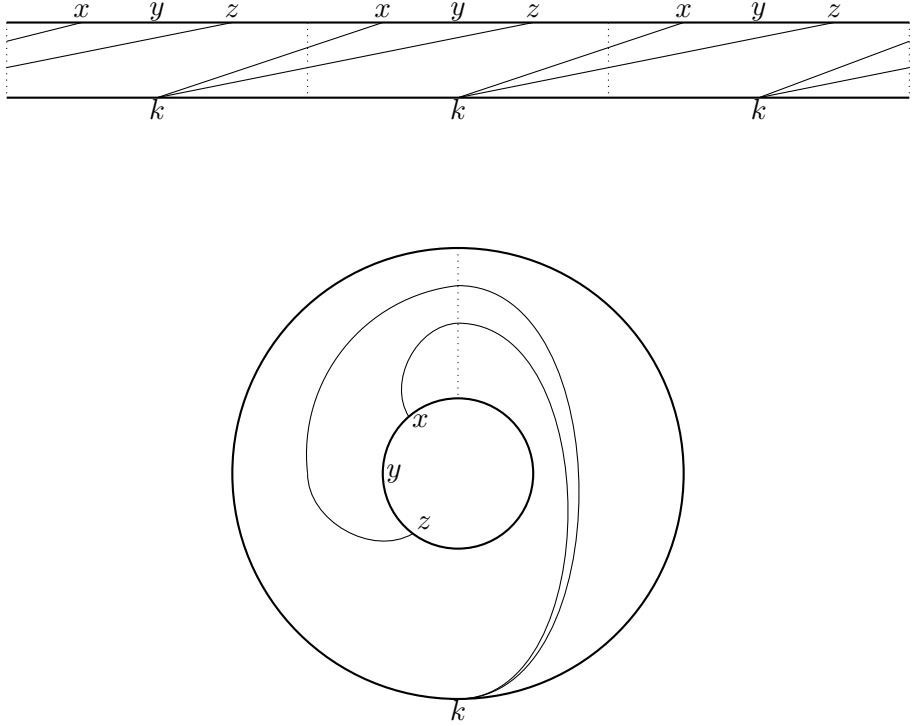

\begin{lemma}
If $(S,T)$ is a surface with chosen cuts, then $\mathcal{G}_{(S,T)}$ is a tree.
\label{lem:tree}
\end{lemma}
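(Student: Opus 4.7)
The plan is to show $\mathcal{G}_{(S,T)}$ is connected; since it has $b$ vertices and $b-1$ edges, connectedness forces it to be a tree. I would perform the $b-1$ cuts sequentially, producing a sequence $S = S_0, S_1, \dots, S_{b-1}$ of (possibly disconnected) surfaces with $S_{b-1}$ the result of cutting $S$ open along all the cuts (whose interior recovers $T$), and in parallel track the partial subgraph $\mathcal{G}_k \subseteq \mathcal{G}_{(S,T)}$ built from the first $k$ cuts. An easy induction will establish the bookkeeping fact that two boundary components of $S$ lie in the same boundary component of $S_k$ if and only if they lie in the same connected component of $\mathcal{G}_k$.

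The key topological input is that each $S_k$ is an orientable genus-zero surface with boundary, embedded in $S^2$. Applying the Jordan curve theorem in $S^2$ to the simple closed curve obtained by closing up any arc with a sub-arc of the boundary, one sees that any properly embedded arc in $S_k$ whose two endpoints lie on a single boundary component must be separating. Consequently the $(k+1)$st cut falls into exactly one of two cases: either its endpoints lie on two distinct boundary components of $S_k$, in which case cutting merges them (decreasing the number of boundary components by one and preserving the number of connected components), or its endpoints lie on a single boundary component, in which case cutting splits the ambient connected component of $S_k$ into two pieces (increasing both the number of connected components and the number of boundary components by one). In particular, the number of connected components of $S_k$ is monotone non-decreasing in $k$.

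Since $T$ is simply connected it is a single disk, so $T$ is connected. Monotonicity then rules out the second (separating) case entirely: every cut must be of the first type, joining two distinct boundary components of $S_k$. Via the inductive bookkeeping, this says each cut is an edge of $\mathcal{G}_{(S,T)}$ that joins two previously distinct connected components of $\mathcal{G}_k$, i.e.\ the cuts are added in exactly the pattern of a spanning-tree construction. After $b-1$ such steps $\mathcal{G}_{(S,T)}$ is connected, and the lemma follows.

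The main obstacle is the case dichotomy in the second paragraph: one must verify carefully, using the genus-zero hypothesis together with the Jordan curve theorem in $S^2$, both that arcs with both endpoints on the same boundary component are separating, and that the effect on connected-component and boundary-component counts is exactly as claimed; once this is set up, the remainder of the argument is a routine graph-theoretic accounting.
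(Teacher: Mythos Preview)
Your argument is correct but takes a genuinely different route from the paper's. The paper gives a two-line topological argument by contradiction: if $\mathcal{G}_{(S,T)}$ were disconnected, one could exhibit a non-contractible loop in $T$ (violating simple connectedness), and if $\mathcal{G}_{(S,T)}$ contained a cycle, then $T$ would be disconnected. Your approach is instead combinatorial and constructive: you process the cuts one at a time, use the Jordan curve theorem in $S^2$ (via the genus-zero hypothesis) to set up the merging/separating dichotomy for each arc, and then invoke monotonicity of the component count together with the connectedness of $T$ to force every cut into the merging case---precisely a Kruskal-style spanning-tree build on $\mathcal{G}_{(S,T)}$.

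The paper's approach is shorter and more conceptual but leaves the reader to supply why a disconnected $\mathcal{G}_{(S,T)}$ actually yields a non-contractible loop in $T$. Your approach is more explicit and has the small advantage that it uses only the connectedness of $T$ (not full simple connectedness), deducing acyclicity from the vertex/edge count $b$ versus $b-1$. The cost is that you must carry the bookkeeping invariant and justify the Jordan-curve dichotomy carefully, which is more machinery than the lemma really needs; but nothing in your outline is wrong, and the ``main obstacle'' you flag is indeed the only place requiring care.
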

\begin{proof}
By definition $T$ must be simply connected.
If $\mathcal{G}_{(S,T)}$ is not connected, then there would exists loops in $T$ which cannot be contracted.
If $\mathcal{G}_{(S,T)}$ contains a cycle, then $T$ would not be connected.
Therefore, $\mathcal{G}_{(S,T)}$ must be a tree. 
\end{proof}

\begin{lemma}
If $(S,T)$ is a surface with chosen cuts with $b > 0$ boundary components, then 
\[\var(B(N,T)) \leq \min\{2(b-1), n-1\}\]
for any $N \in \Net^S_{1,n}$ and choice of positive real weights.
\label{lem:onesourcebound}
\end{lemma}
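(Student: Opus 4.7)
First I would apply Lemma~\ref{lem:nointerior} to reduce to networks without interior vertices; this preserves $\var(B(N,T))$ exactly. In the reduced setting, each entry takes the form $B_{k,j} = \sum_{e: k \to j} (-1)^{r_e + 1} x_e$, a signed sum of positive weights $x_e > 0$ whose sign depends only on the rotation number $r_e$ of the closed curve $C(e)$, since there is only one source and $s_{kj} = 0$.

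The bound $\var(B(N,T)) \leq n-1$ is immediate for any vector in $\R^n$, so the substantive work is in proving $\var(B(N,T)) \leq 2(b-1)$. I would induct on $b$ using the tree structure of $\G_{(S,T)}$ from Lemma~\ref{lem:tree}. The base case $b = 1$ is Postnikov's disk case: $T = S$ has no cuts and $\Gr^S_{1,n} = \Gr^{\geq 0}_{1,n}$, so all entries of $B(N,T)$ share a common sign after scaling and $\var(B(N,T)) = 0 = 2(b-1)$.

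For the inductive step $b \geq 2$, I would choose a leaf $\beta$ of $\G_{(S,T)}$ with unique incident cut $c$ joining $\beta$ to a neighboring component $\beta'$. A thin tubular neighborhood of $c$ unioned with the two disks of $\mathbb{S}^2$ bounded by $\beta$ and $\beta'$ forms a topological disk in $\mathbb{S}^2$, so removing it together with the remaining holes produces a genus-zero surface $S'$ with $b-1$ boundary components. Setting $T' = T \setminus \{c\}$ gives a valid choice of cuts for $S'$, and the graph $N$ becomes a network $N' \in \Net^{S'}_{1,n}$. The heart of the argument is then the key inequality
\[
\var(B(N,T)) \leq \var(B(N',T')) + 2,
\]
which combined with the inductive hypothesis $\var(B(N',T')) \leq 2(b-2)$ yields $\var(B(N,T)) \leq 2(b-1)$.

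The key inequality would be proved by tracking how rotation numbers of the closed curves $C(P)$ change when passing between $(S,T)$ and $(S',T')$. Under the $\phi'$-ordering of $\partial T'$, the vertices of $N$ on $\beta$ form a single contiguous arc (because $\beta$ is a leaf, so only $c$ touches it), whereas in the $\phi$-ordering of $\partial T$ those same vertices are bracketed by the two sides of the cut $c$. Reintroducing $c$ as a cut can therefore split the $\beta$-arc off from the rest and introduce at most two new sign changes in the resulting sign vector. The main obstacle is handling the rotation-number contributions of paths that wind many times around $\beta$, since crossing $c$ repeatedly adds $\pm 1$ to the rotation number and can flip signs of individual contributions to $B_{k,j}$. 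An alternative approach, hinted at by Figure~\ref{fig:universal_cover}, is to lift $N$ to an appropriate cover of $S$ where the topology is simplified to a planar setting and the bound on sign variation can be read off directly from the number of sheets involved.
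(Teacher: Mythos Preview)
Your skeleton is right—reduce via Lemma~\ref{lem:nointerior}, note $s_{kj}=0$, handle $n-1$ trivially, and induct on $b$ using a leaf of the tree $\mathcal{G}_{(S,T)}$ from Lemma~\ref{lem:tree}. But the inductive step, as you yourself flag, has a genuine gap. Your construction merges the leaf $\beta$ with its neighbor $\beta'$ into a single boundary component of a new surface $S'$ while keeping all $n$ vertices, and then asserts $\var(B(N,T)) \leq \var(B(N',T')) + 2$. The problem is exactly the one you name: edges of $N$ can wind around $\beta$ many times, so passing from $(S,T)$ to $(S',T')$ changes rotation numbers of individual path contributions in uncontrolled ways, and there is no reason the entries of $B(N',T')$ agree with those of $B(N,T)$ outside the $\beta$-block. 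Your ``alternative approach'' via a cover is only a gesture, not an argument.

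The paper's inductive step avoids this entirely by \emph{deleting} the leaf component $C$ along with all vertices on it, rather than merging it with a neighbor. This gives a network $N'$ on a surface $S'$ with $b-1$ components and \emph{fewer} boundary vertices, and crucially $B(N',T')$ is literally the subvector of $B(N,T)$ obtained by dropping the entries indexed by $C$. Because $C$ is a leaf of $\mathcal{G}_{(S,T)}$, those dropped entries form a contiguous block in $B(N,T)$. The real work is then an independent claim you are missing: for any boundary component $C$ not containing the source, $\var([B_x : x \in C]) \leq 1$. The paper proves this by reducing to the annulus (delete all other vertices, contract all other components, keep only the cuts on the path from the source's component to $C$ in the tree), where one checks directly that three consecutive sinks $x<y<z$ on $C$ with $B_xB_z>0$ force $B_xB_y\geq 0$ and $B_yB_z\geq 0$. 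Inserting a contiguous block with $\var\leq 1$ into $B(N',T')$ raises sign variation by at most $2$, and the induction closes. The missing ingredient in your proposal is precisely this per-component bound; once you have it, there is no need to compare boundary measurements on two different surfaces at all.
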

\begin{proof}
We need only show that $\var(B(N,T)) \leq 2(b-1)$ since $\var(B(N,T)) \leq n-1$ for $N \in \Net^S_{1,n}$ on any surface.
Furthermore, by Lemma~\ref{lem:nointerior} it suffices to consider networks with no interior vertices.
Take $N \in \Net^S_{1,n}$ with no interior vertices and let $k$ be the unique boundary source of $N$.
In particular, the network $N$ will have no cycles.
Then $B(N,T) = [B_i : i \in I]$ where 
\[B_i = \sum_{P: k \pathto i} (-1)^{s_{ki} + r_P + 1} \wt(P) = \sum_{P: k \pathto i} (-1)^{r_P + 1} \wt(P)\]
since $s_{ki} = 0$ for all $i \in I$.
Thus, signs arising in $B(N,T)$ come only from the rotation number.
Since $N$ is acyclic the rotation number $r_P$ can only be changed by a path $P: k \pathto i$ crossing cuts used to connected $i$ and $k$ along the boundary of $T$.

Assume we have boundary sinks $x < y < z $ such that $x$, $y$, and $z$ are all on the same component of the boundary.
If $x$, $y$, and $z$ are on the same boundary component as $k$, then $B_x, B_y, B_z \geq 0$.
So, we will assume that $k$ is on a different boundary component.
We claim we can compute $B_x$, $B_y$ and $B_z$ with a network on the annulus.
Let $C$ be the boundary component containing $k$ and $C'$ be the boundary component containing $x$, $y$, and $z$.
First, delete all vertices except for $k$, $x$, $y$, and $z$.
Next we contract all boundary components except $C$ and $C'$ to a point.
By Lemma~\ref{lem:tree} $\mathcal{G}_{(S,T)}$ is a tree.
We then keep only the cuts corresponding to edges along the unique path between $C$ and $C'$ in $\mathcal{G}_{(S,T)}$.
It then follows that  $B_x$, $B_y$ and $B_z$ can be computed on the new network we have just formed on the annulus.
Any cut not along the unique path between $C$ and $C'$ in $\mathcal{G}_{(S,T)}$ will be traversed either zero or two times when closing paths, and hence these cuts will not contribute the parity of the rotation number.

We continue working with the network we have formed on the annulus.
Further assume that $B_xB_z > 0$.
It follows that $B_xB_y \geq 0$ and $B_yB_z \geq 0$.
In Figure~\ref{fig:universal_cover} we see an example of this situation depicted on both the annulus and the universal cover of the annulus.
It then follows that $\var([B_x : x \in C]) \leq 1$.

We can now prove the lemma by induction on the number of boundary components $b$.
When $b = 0$ we know that $\var(B(N,T)) = 0$ and the lemma holds.
If $b>0$, then $S$ can be obtained by adding a boundary component $C$ to some $S'$ with $b-1$ boundary components.
By Lemma~\ref{lem:tree} a boundary component $C$ which touches exactly one cut will necessarily exist, and we choose such a $C$.
We then have a network $N'$ on $S'$ with choice of cuts $T'$ obtained by deleting all vertices, edges, and unique cut which intersects with $C$.
By induction $\var(B(N',T')) \leq 2(b-2)$.
Also, $B(N,T)$ can be obtained by taking $B(N',T')$ then inserting the entire vector $[B_x : x \in C]$ somewhere.
That is, because $C$ was a leaf in $\mathcal{G}_{(S, T)}$ the entries $B_x$ for $x \in C$ will be a continuous segment in the vector $B(N,T)$.
Thus, $\var(B(N,T)) \leq 2(b-2) + 2 = 2(b-1)$ as desired since $\var(B(N',T')) \leq 2(b-2)$ and $\var([B_j : j \in C']) \leq 1$.
\end{proof}

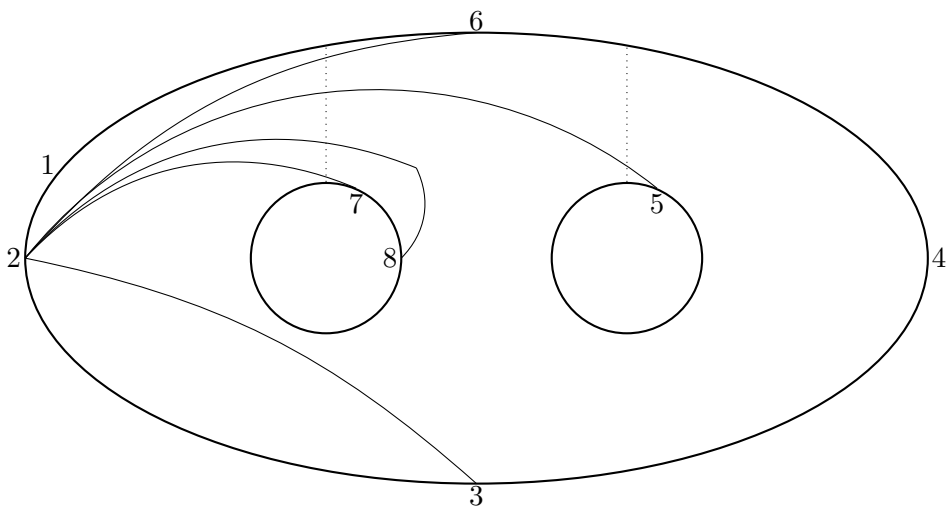
\begin{figure}
    \centering
    \begin{tikzpicture}
	\draw[thick] (0,0) ellipse (6cm and 3cm);
	\draw[thick] (-2,0) circle (1cm);
	\draw[thick] (2,0) circle (1cm);
	\draw[dotted] (-2,2.8) to (-2,1);
	\draw[dotted] (2,2.8) to (2,1);
	
	\draw (-6, 0) to[bend left=15] (0,-3);
	\draw (-6, 0) to[bend left=22] (0,3);
	\draw (-6, 0) to[bend left=45] (2.4,0.93);
	\draw (-6, 0) to[bend left=35] (-1.6,0.93);
	\draw (-6, 0) to[bend left=35] (-0.8,1.2);
	\draw (-0.8, 1.2) to[bend left=35] (-1,0);
	
	\node at (-5.7,1.24) {$1$};
	\node at (-6.15,0) {$2$};
	\node at (0,-3.16) {$3$};
	\node at (6.15,0) {$4$};
	\node at (2.4,0.73) {$5$};
	\node at (0,3.16) {$6$};
	\node at (-1.6,0.73) {$7$};
	\node at (-1.15,0) {$8$};

    \end{tikzpicture}
    \caption{A situation illustrating the construction in the proof of Lemma~\ref{lem:onesourcebound}.}
    \label{fig:construction}
\end{figure}

\begin{lemma}
If $S$ is a closed orientable surface with $b > 0$ boundary components and $v \in \mathbb{RP}^{n-1}$ with $\var(v) \leq \min\{2(b-1), n-1\}$, then 
\[B(N,T) = v\]
for some $N \in \Net^S_{1,n}$ and choice of cuts $T$.
\label{lem:onesourceconstruct}
\end{lemma}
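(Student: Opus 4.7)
The plan is to construct $N$ explicitly and proceed by induction on the number of boundary components $b$. By Lemma~\ref{lem:nointerior} I may assume $N$ has no interior vertices, in which case each non-isolated sink $i$ is joined to the source $k$ by a single edge whose contribution to $B_i$ is $(-1)^{r_P+1}|v_i|$, where $r_P$ is the rotation number of the loop formed by that edge together with the $\phi$-arc of $\partial T$ from $i$ back to $k$. Thus the task reduces to a purely topological one: for each $i$ with $v_i\neq 0$ I must route an edge $k\to i$ whose loop has rotation number of parity matching $\sgn(v_i)$, after which the edge weight $|v_i|$ delivers the correct magnitude.

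For the base case I reduce to the disk. When $\var(v)=0$ all nonzero entries of $v$ share a common sign (say $+$ after a global rescaling), and the classical identification $\Gr^S_{1,n}=\Gr^{\geq 0}_{1,n}$ for $S$ a disk~\cite{Pos06} yields $N$ using just one boundary component of $S$; the remaining $b-1$ components carry only isolated sinks and contribute nothing to $B(N,T)$. For the inductive step suppose $\var(v)\geq 2$. By Lemma~\ref{lem:tree} the cut graph $\mathcal{G}_{(S,T)}$ is a tree; pick any leaf $C$. As observed in the proof of Lemma~\ref{lem:onesourcebound}, the boundary vertices on $C$ occupy a contiguous segment in the $\phi$-ordering. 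I peel off a contiguous sub-segment of $v$ corresponding to a single internal maximal constant-sign block of $v$: it has variation zero, and because the two neighboring blocks share the opposite sign, its removal leaves a complementary vector $v'$ with $\var(v')=\var(v)-2\leq 2(b-2)$. By induction $v'$ is realized by a network on the surface $S'$ with $b-1$ components obtained from $S$ by collapsing $C$ and its attaching cut. I then place the sinks of the peeled block on $C$ and draw, from $k$ through the attaching cut, a single edge to each; wrapping such an edge around $C$ zero or one times shifts $r_P$ by one, giving me access to either parity and hence the required common sign.

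The main obstacle is dealing cleanly with the intermediate case $\var(v)=1$, which falls outside both the zero-variation base case and the $\var\geq 2$ inductive step, and which requires peeling an entire end-block onto $C$ (reducing by $1$ rather than $2$) so that the residual vector $v'$ has variation $0$ and is realized on the disk. Beyond this case analysis, one must verify that wrapping the cut-crossing edges around $C$ is a purely local topological modification confined to the collar between $C$ and its attaching cut; this is immediate because the other edges of $N$ are drawn inside $S'$ and never enter that collar, so the signs of $v'$ previously achieved on $S'$ are undisturbed when the two pieces are glued to produce the desired $N\in\Net^S_{1,n}$.
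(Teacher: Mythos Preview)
Your approach is genuinely different from the paper's. The paper gives a one-shot explicit construction: it fixes the cut graph $\mathcal{G}_{(S,T)}$ to be a star (every internal boundary component cut directly to the external one), puts the source and all positive entries on the external component, and assigns each maximal run of negative entries to its own internal component. No induction, no wrapping, no peeling. Your inductive scheme is more elaborate but does reach the same conclusion; each buys something different---the paper's version is shorter and makes the role of the bound $2(b-1)$ transparent (at most $b-1$ negative runs), while yours would in principle adapt more readily to other cut-tree shapes.

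That said, there is a genuine gap in your verification step. You argue that gluing in $C$ leaves the signs of $v'$ undisturbed because ``the other edges of $N$ are drawn inside $S'$ and never enter that collar.'' But the sign of $B_j$ depends on the rotation number of the \emph{closed} curve $C(P)$, which is the edge $k\to j$ \emph{together with} the $\phi$-arc of $\partial T$ from $j$ back to $k$. When you insert $C$ and its cut, that $\phi$-arc changes: for every $j$ whose return arc passes the attachment point, the arc now takes a detour out along the cut, once around $C$, and back. Your locality argument controls only the edge, not this closing arc, so it does not by itself show the rotation number is unchanged. The conclusion you want is nonetheless true---the detour contributes $0$ to the rotation number because $\partial T$ and $\partial T'$ are both simple closed planar curves and hence both have rotation number $\pm 1$ with the same sign---but that is the argument you need to give.

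Two smaller points. First, invoking Lemma~\ref{lem:nointerior} at the outset is backwards: that lemma simplifies a \emph{given} network, whereas here you are constructing one; just say you will build $N$ with no interior vertices. Second, appealing to Lemma~\ref{lem:tree} to ``pick a leaf $C$'' is circular, since $T$ has not yet been chosen; you should instead pick any boundary component of $S$ and declare that you will attach it as a leaf when you build $T$.
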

\begin{proof}
We will make the choice of cuts where each boundary component is connected to the external boundary component by a cut.
So, $\mathcal{G}_{S,T}$ will be the star graph with a single vertex of degree $b-1$ while the remaining vertices are leaves.
Furthermore, we parameterize the boundary of $T$ so that we start on the external boundary component.
Without loss of generality we take a representative of $v \in \mathbb{RP}^{n-1}$ with $\var(v) \leq \min\{2(b-1), n-1\}$ where the first nonzero entry is $1$.
We can then realize $v$ as $B(N,T)$ where the first nonzero entry corresponds to the source of $N$.
Each positive entry of $v$ will correspond to a vertex on the external boundary component while each negative entry of $v$ corresponds to a sink on another boundary component.
Hence, when ignoring zeros we have each run of negative entries on a different boundary component.
By the assumption that $\var(v) \leq 2(b-1)$ it follows that such a construction can be made.
Figure~\ref{fig:construction} shows how a network constructed as described would look for $b = 3$ and $\sgn(v) = (0,+,+,0,-,+,-,-)$.
\end{proof}

Combining Lemma~\ref{lem:onesourcebound} and Lemma~\ref{lem:onesourceconstruct} we obtain the following theorem.

\begin{theorem}
If $S$ is a closed orientable surface of genus zero with $b > 0$ boundary components, then
\[\Gr^S_{1,n} = \Gr_{1,n}^{\var \leq \min \{2(b-1), n-1\}}.\]
\label{thm:onesource}
\end{theorem}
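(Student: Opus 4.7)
The plan is to prove the two containments separately, with each direction being an immediate application of one of the preceding lemmas. Recall that when $k=1$ the boundary measurement matrix $B(N,T)$ is a single row, so we view it as an element of $\mathbb{RP}^{n-1}$, and the defining condition for $\Gr^{\var \leq m}_{1,n}$ reduces to $\var(B(N,T)) \leq m$ since every element of a $1$-dimensional subspace is a scalar multiple of a generator.

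For the forward containment $\Gr^S_{1,n} \subseteq \Gr_{1,n}^{\var \leq \min\{2(b-1),n-1\}}$, I would take an arbitrary $V \in \Gr^S_{1,n}$, write $V = B(N,T)$ for some $N \in \Net^S_{1,n}$ and choice of cuts $T$, and apply Lemma~\ref{lem:onesourcebound} directly to conclude $\var(B(N,T)) \leq \min\{2(b-1), n-1\}$, placing $V$ in $\Gr_{1,n}^{\var \leq \min\{2(b-1),n-1\}}$.

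For the reverse containment $\Gr_{1,n}^{\var \leq \min\{2(b-1),n-1\}} \subseteq \Gr^S_{1,n}$, I would take any $v \in \mathbb{RP}^{n-1}$ with $\var(v) \leq \min\{2(b-1), n-1\}$ and apply Lemma~\ref{lem:onesourceconstruct} to produce a network $N \in \Net^S_{1,n}$ and choice of cuts $T$ with $B(N,T) = v$, placing $v$ in $\Gr^S_{1,n}$.

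There is essentially no obstacle here: the entire content is carried by the two lemmas, which handle the upper bound on sign variation and the realizability, respectively. The only minor bookkeeping is to note that the two lemmas use the same bound $\min\{2(b-1), n-1\}$, so no mismatch arises in combining them, and that the identification of single-row matrices with points of $\mathbb{RP}^{n-1}$ (modulo the overall scalar) is consistent with both the definition of $\Gr^S_{1,n}$ and with the sign variation condition.
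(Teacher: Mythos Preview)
Your proposal is correct and matches the paper's approach exactly: the paper simply states that the theorem follows by combining Lemma~\ref{lem:onesourcebound} and Lemma~\ref{lem:onesourceconstruct}, which is precisely the two-containment argument you outline.
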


Theorem~\ref{thm:onesource} says that $\Gr_{1,n}^{\var \leq m}$ is a boundary measurement Grassmannian for some $S$ if and only if $m$ is even or $m = n-1$.
Thus, from Corollary~\ref{cor:CM} we see that $\Gr^S_{1,n}$ is $\k$-CM for any $\k$ not of characteristic $2$.

\bibliographystyle{alpha}
\bibliography{SignBib}
\end{document}